\theoremstyle{plain}
\newtheorem{theorem}{Theorem}
\newtheorem{corollary}{Corollary}
\newcommand{\norm}[1]{\left\lVert#1\right\rVert}
\newcommand{\B}{\mathcal{B}}
\newcommand{\W}{\mathcal{W}}
\newcommand{\e}{\epsilon}
\renewcommand{\l}{\ell}
\newcommand{\E}{\mathbb{E}}
\newcommand{\R}{\mathbb{R}}
\newcommand{\A}{\mathcal{A}}
\newcommand{\V}{\mathcal{V}}
\newcommand{\tr}{\text{tr}}
\newcommand{\vectorize}{\text{vec}}
\newcommand\mat[1]{\begin{pmatrix}#1\end{pmatrix}} 
\let\temp\phi
\let\phi\varphi
\let\varphi\temp
\begin{document}

\begin{frontmatter}

\title{Geometric ergodicity of Gibbs samplers for Bayesian error-in-variable regression}
\runtitle{Geometric ergodicity of Gibbs samplers for Bayesian EIV regression}

\author{\fnms{Austin} \snm{Brown}\corref{}\ead[label=e1]{austin.d.brown@warwick.ac.uk}}
\address{Department of Statistics, University of Warwick, Coventry, UK \printead{e1}}

\runauthor{A. Brown}

\begin{abstract}
Multivariate Bayesian error-in-variable (EIV) linear regression is considered to account for additional additive Gaussian error in the features and response. A 3-variable deterministic scan Gibbs samplers is constructed for multivariate EIV regression models using classical and Berkson errors with independent normal and inverse-Wishart priors. These Gibbs samplers are proven to always be geometrically ergodic which ensures a central limit theorem for many time averages from the Markov chains. We demonstrate the strengths and limitations of the Gibbs sampler with simulated data for large data problems, robustness to misspecification and also analyze a real-data example in astrophysics.
\end{abstract}

\begin{keyword}[class=MSC]
\kwd[Primary ]{60J22}
\kwd[; secondary ]{62F15}
\end{keyword}

\begin{keyword}
\kwd{Bayesian statistics}
\kwd{Convergence analysis}
\kwd{Error-in-variable models}
\kwd{Gibbs sampling}
\kwd{Markov chain Monte Carlo}
\kwd{Measurement error models}
\end{keyword}

\received{\smonth{4} \syear{2023}}

\tableofcontents

\end{frontmatter}

\section{Introduction}

Many problems in astrophysics \citep{Feigelson1992, Hilbe2017, Kelly2012, Stefanski2000} and epidemiology \citep{Achic2018, Buonaccorsi2010, Carroll2006, Clayton1992} among other areas of science \citep{Marcus2016, Pollice2019, Tang2017} involve error in variables (EIV) which classical linear regression does not take into account.
EIV can occur in many situations such as measurement error in data collection \citep{Hilbe2017, Kelly2012}, discrepancies between the data distribution and the model \citep{Carroll2006, Buonaccorsi2010}, or purposeful adversarial attacks against the data \citep{Goodfellow2015, Szegedy2014}.
Not surprisingly, multiple critical issues arise in parameter estimation and statistical inference when ignoring additional errors in the data such as poor predictive performance \citep{Goodfellow2015}, statistical bias \citep{Damgaard2020, Kroger2016, Vidal2008}, and estimators fail to be consistent \citep{Michalek1980}.

Bayesian approaches develop a strategy for additional error in the variables by constructing a new model incorporating additional error.
We consider multivariate Bayesian EIV linear regression \citep{Farr2020, Dellaportas1995, Xing2017, Huang2010, Mallick1996, Muff2015, Richardson1993, Rodrigues2007, Torabi2021, Vidal2010} accounting for additive Gaussian error in the features (covariates) and response.
We assume the variability of the additive Gaussian error is known beforehand which arises often in astrophysics in the presence of known instrumentation error \citep{Hilbe2017, Kelly2012}.
Alternative approaches to EIV models attempt to correct existing parameter estimation methods such as least squares or method of moments with weighting and other techniques \citep{Fuller1987, Stefanski1985}.
Several other strategies for EIV modeling are discussed in more comprehensive treatments on the topic \citep{Buonaccorsi2010, Carroll2006, Fuller1987}.


We write $x \sim N_d(m, C)$ to mean the $d$-dimensional normal distribution with mean $m$ and symmetric, positive-definite (SPD) covariance matrix $C$.
We also write $x \sim \W_d^{-1}(\nu, V)$ to be the inverse-Wishart distribution with positive integer degrees of freedom $\nu \ge d$ and scale SPD matrix $V \in \R^{d \times d}$.
Let $\vectorize(A)$ denote the vectorization of a matrix $A$ by stacking the columns. 
Let $(Y_i, X_i, Z_i)_{i = 1}^n$ be independent and identically distributed (i.i.d.) where the response $Y_i$ takes values in $\R^m$ along with features $X_i$ taking values in $\R^p$ and fixed, known features $Z_i \in \R^q$ where $m, n, p, q$ are positive integers.
Let $\theta = \vectorize(\Theta) \in \R^{qm}$, $\beta = \vectorize(\B) \in \R^{pm}$, and SPD matrix $\Sigma \in \R^{m \times m}$ be unknown regression and covariance parameters respectively.
We introduce new parameters $\A = (\A_1, \ldots, \A_n)^T$ with $\A_i \in \R^p$ to model additional error in $X_i$ using classical or Berkson errors \citep{Berkson1950}. 
The classical error model specifies $X_i | \A_i$ and the Berkson error model \citep{Berkson1950} assumes instead a data-dependent prior on $\A_i | X_i$.
When there is additional error in $X_i$, the EIV linear regression model for $i \in 1, \ldots, n$ is i.i.d. with
\begin{subequations}
\label{eq:EIV_regression_model_X}
\begin{align}
&Y_i | \A_i, \theta, \beta, \Sigma \sim N_m( \Theta^T Z_i + \B^T \A_i, \Sigma)
\\
& X_i | \A_i \sim N_p(\A_i, V_{i}) \text{ (Classical)}
\text{\hspace{.4cm} or \hspace{.4cm}} 
\A_i | X_i \sim N_p(X_i, V_{i}) \text{ (Berkson)}
\end{align}
\end{subequations}
where the SPD matrices $V_i \in \R^{p \times p}$ are known.
When there is also additional error in the responses $Y_i$, we assume an i.i.d. hierarchical regression model with
\begin{subequations}
\label{eq:EIV_regression_model_X_Y}
\begin{align}
&Y_i | \V_i \sim N_m( \V_i, U_{i})
\\
&\V_i | \A_i, \theta, \beta, \Sigma \sim N_m( \Theta^T Z_i + \B^T \A_i, \Sigma)
\\
& X_i | \A_i \sim N_p(\A_i, V_{i}) \text{ (Classical)}
\text{\hspace{.4cm} or \hspace{.4cm}} 
\A_i | X_i \sim N_p(X_i, V_{i}) \text{ (Berkson)}
\end{align}
\end{subequations}
where $U_{i} \in \R^{m \times m}$ are known SPD matrices.

We will be interested in the posterior for both models \eqref{eq:EIV_regression_model_X} and \eqref{eq:EIV_regression_model_X_Y} using independent normal and inverse-Wishart priors on the parameters $(\A, \theta, \beta, \Sigma)$. 
The independent prior choice is a popular choice in Bayesian regression models with and without measurement error \citep{Carroll2006, Dellaportas1995, KarlOskar2019, raja:spar:2015}.
For the EIV regression models \eqref{eq:EIV_regression_model_X} and \eqref{eq:EIV_regression_model_X_Y}, the independent priors are chosen
\begin{subequations}
\label{eq:EIV_priors}
\begin{align}
&\Sigma \sim \W^{-1}(a_0, B_0)
\\
&(\theta, \beta)^T \sim N_{m(q + p)}(j_0, J_0)
\end{align}
\end{subequations}
where $a_0 \ge m$ is a positive integer, $B_0 \in \R^{m \times m}$ is a SPD matrix, $j_0 \in \R^{m(q + p)}$ and SPD matrix $J_0 \in \R^{m(q + p) \times m(q + p)}$.
The classical and Berkson error models assume either
\begin{align}
\label{eq:EIV_priors_A}
& \A_i \sim N_{p}(k_i, K_i) \text{ (Classical)}
\text{\hspace{.4cm} or \hspace{.4cm}} \A_i \text{ flat prior} \text{ (Berkson)}
\end{align}
where $k_i \in \R^p$ and $K_i \in \R^{p \times p}$ are SPD matrices.
For example, an \textit{exposure model} \citep{Gustafson2003} utilized often in epidemiology would assume classical errors with a data-dependent prior on each $\A_i$ depending on $Z_i$.
In the Berkson error model, each $\A_i | X_i$ is already specified and it is natural to assume an improper flat prior on each $\A_i$.

Previous work has proposed Gibbs sampling \citep{Geman1984} to draw samples from the posterior, denoted by $\Pi_n$, in Bayesian EIV regression models \citep{Bhadra2016, Carroll2006, Dellaportas1995, Richardson1993}.
However, trustworthy estimation from a Gibbs sampler requires the Markov chain to converge to the posterior distribution at a sufficiently fast rate.
Consider a vector-valued function $f$ with $\int \norm{f}^{2 + \delta} d\Pi_{n} < \infty$ for some $\delta \in (0, \infty)$ and denote $\bar{f}_{m}$ as the time average of $m$ samples from the Gibbs sampler.
In order to be confident in the estimator $\bar{f}_{m}$ in applications, a standard error and confidence interval are essential.
A Gibbs sampler is geometrically ergodic if initialized at points, its marginal distribution is converging to $\Pi_n$ at an exponential rate in total variation.
Geometrically ergodic Gibbs samplers provide rich theoretical guarantees which are of practical relevance in applications.
These Gibbs samplers satisfy a central limit theorem \citep{Chan1994, Jones2004}, that is,
\[
\sqrt{m} \left( \bar{f}_{m} - \int f d\Pi_n \right)
\]
is asymptotically normally distributed and under suitable assumptions, the covariance in this normal distribution can be consistently estimated \citep{Flegal2010}.
Further pertinent tools to ensuring reliable estimation such as estimates of the effective sample size, consistent confidence ellipsoids, and consistent confidence intervals for quantile estimation are also available \citep{Doss2014, Vats2019}.

To the best of our knowledge, the rate of convergence for Gibbs sampling in EIV regression models has not been previously investigated.
Related approaches have instead proposed variational Bayesian methods \citep{Bresson2021, Pham2013} and the integrated nested Laplace approximation (INLA) \citep{Rue2009, Muff2015}.
We construct a general density which in special cases, is the posterior for the $4$ Bayesian EIV regression models \eqref{eq:EIV_regression_model_X} and \eqref{eq:EIV_regression_model_X_Y} using the independent normal and inverse-Wishart prior choice on the parameters \eqref{eq:EIV_priors} and \eqref{eq:EIV_priors_A}.
Our main contribution constructs a 3-variable deterministic scan Gibbs sampler for this general density, and we show it is \textit{always} geometrically ergodic using a drift and minorization condition \citep{Hairer2011, Meyn2009}.
The 3-variable Gibbs sampler we construct can be simulated efficiently on a computer without the need for complex Metropolis-Hastings or rejection sampling steps at each iteration.

The organization of this paper is as follows.
In Section~\ref{section:gibbs_sampler}, we construct a general EIV regression density and construct a 3-variable Gibbs sampler for this density.
We show the Gibbs sampler is always geometrically ergodic and apply this to the $4$ multivariate Bayesian EIV regression models presented in this introduction.
Section~\ref{section:simulation} studies the algorithm empirically where we demonstrate limitations of the Gibbs sampler with simulated data for large data problems and also the behavior of the Gibbs sampler under model misspecification. 
Section~\ref{section:astro_example} studies a real-data example in astrophysics to study supermassive black hole mass \citep{Harris2014, Hilbe2017}.
Finally in Section~\ref{section:conclusion_me}, we discuss our results and future research directions.

\section{General Gibbs Sampler for EIV regression}
\label{section:gibbs_sampler}

For positive integers $p$, define $p$-norms by $\norm{\cdot}_p$ and the Frobenius norm by $\norm{\cdot}_F$. 
Let $\otimes$ denote the Kronecker product.
The posteriors for the Bayesian EIV regression models \eqref{eq:EIV_regression_model_X} and \eqref{eq:EIV_regression_model_X_Y} using independent prior choices \eqref{eq:EIV_priors} and \eqref{eq:EIV_priors_A} for both classical and Berkson errors share a common general form which we study in this section.
The posterior densities for these Bayesian EIV regression models are special cases of the density \eqref{eq:general_posterior} but will differ depending on the EIV modeling choice illustrated in the subsequent sections.
For $i \in 1, \ldots, n$, define hyper-parameters $a_0 \in (0, \infty)$, SPD matrix $B_0 \in \R^{m \times m}$, $c_0 \in \R^{m(p + q)}$, SPD matrices $C_{0} \in \R^{m(p + q) \times m(p + q)}$, $D_i \in \R^{p \times p}$, and $d_i \in \R^p$, $R = (R_1, \ldots, R_n)^T \in \R^{n \times m}$, and $M \in \R^{n \times q}$.
For $\A \in \R^{n \times p}$, $\theta = \vectorize(\Theta) \in \R^{mq}$, $\beta = \vectorize(\B) \in \R^{mp}$, SPD matrix $\Sigma \in \R^{m \times m}$, define the density
\begin{subequations}
\label{eq:general_posterior}
\begin{align}
&\pi_n(\A, \theta, \beta, \Sigma)
\\
&\propto
\left( \frac{1}{\det(\Sigma)} \right)^{(n + a_0 + 1 + m)/2}
\exp\left[
- \frac{1}{2} \tr( \Sigma^{-1} B_0 )
\right]
\\
&\hspace{.4cm} 
\times
\exp\left[
-\frac{1}{2} \sum_{i = 1}^n ( R_i - \Theta^T M_i - \B^T \A_i )^T \Sigma^{-1} ( R_i - \Theta^T M_i - \B^T \A_i )
\right]
\\
&\hspace{.4cm} 
\times 
\exp\left( - \frac{1}{2} \sum_{i = 1}^n (\A_i - d_i)^T  D_i^{-1} (\A_i - d_i)
\right)
\\
&\hspace{.4cm} 
\times \exp\left( - \frac{1}{2} ((\theta, \beta)^T - c_0)^T  C_0^{-1} ((\theta, \beta)^T - c_0) \right).
\end{align}
\end{subequations}

We will construct a 3-variable deterministic scan Gibbs sampler using a specific update order for the density \eqref{eq:general_posterior}.
We also derive the conditional densities for the Gibbs sampler which can be sampled directly.
Initialize $(\theta_0, \beta_0, \Sigma_0)$ and $\A_0 = (\A_{1, 0}, \ldots, \A_{n, 0})$ from an initial distribution.
For $t \in 1, \ldots$, first generate 
\begin{align*}
&\Sigma_t | \A_{t-1}, \theta_{t-1}, \beta_{t-1}
\\
&\sim \W^{-1}\left( 
n + a_0, 
(R - M \Theta_{t-1} - \A_{t-1} \B_{t-1} )^T (R - M \Theta_{t-1} - \A_{t-1} \B_{t-1} )
+ B_0
\right)
\end{align*}
Next, generate
$
(\theta_t, \beta_t)^T | \A_{t-1}, \Sigma_{t}
\sim N_{m(p + q)}(
c_n(\A_{t-1}, \Sigma_{t}),
C_n(\A_{t-1}, \Sigma_{t}) 
)
$
where
\begin{align*}
&C_n(\A_{t-1}, \Sigma_t)
= \left( 
\Sigma^{-1}_t \otimes \mat{ M & \A_{t-1} }^T \mat{ M & \A_{t-1} }
+ C_{0}^{-1} 
\right)^{-1}
\\
&c_n(\A_{t-1}, \Sigma_{t})
= C_n(\A_{t-1}, \Sigma_{t}) 
\left( 
\left[ \Sigma^{-1}_t \otimes \mat{ M & \A_{t-1} }^T \right] \vectorize(R) 
+ C_0^{-1} c_0
\right).
\end{align*}
Finally, generate independently
\[
\A_{i, t} | \theta_{t}, \beta_{t}, \Sigma_{t}
\sim N_p(
d_{n, i}(\theta_{t}, \beta_{t}, \Sigma_{t}),
D_{n, i}(\theta_{t}, \beta_{t}, \Sigma_{t})
)
\]
where
\begin{align*}
&D_{n, i}(\beta_{t}, \Sigma_{t}) 
= \left( \B_{t} \Sigma^{-1}_t \B_{t}^T + D_{i}^{-1} \right)^{-1}
\\
&d_{n, i}(\theta_{t}, \beta_{t}, \Sigma_{t}) 
= D_{n, i}(\beta_{t}, \Sigma_{t}) 
\left[ D_i^{-1} d_i + \B_{t} \Sigma_{t}^{-1} \left( R_i  - \Theta_{t}^T M_i \right) \right]
\end{align*}
to obtain $\A_t = (\A_{1, t}, \ldots, \A_{n, t})^T$.

For points $(\A, \theta, \beta, \Sigma)$ and $(\A', \theta', \beta', {\Sigma}')$, the Gibbs sampler has Markov transition density 
\begin{align*}
p\left( (\A, \theta, \beta, \Sigma), (\A', \theta', \beta', {\Sigma}') \right) 
&=
\pi_{n}(\A' | \theta', \beta', {\Sigma}')
\pi_{n}(\theta', \beta' | \A, {\Sigma}')
\pi_{n}({\Sigma}' | \A, \theta, \beta)
\end{align*}
and Markov transition kernel defined for suitable sets $B$ by
\begin{align*}
P\left( (\A, \theta, \beta, \Sigma), B \right)
= \int \int \int_{B}
p\left( (\A, \theta, \beta, \Sigma), (\A', \theta', \beta', {\Sigma}') \right)
d{\A}' d{\theta'}  d{\beta}'  d{\Sigma}'. 
\end{align*}
The Markov kernel at larger iteration times $t \ge 2$ is defined recursively with $P^1 \equiv P$ by
\[
P^t\left( (\A, \theta, \beta, \Sigma), B \right) = \int P^{t-1}\left( \cdot, B \right) dP\left( (\A, \theta, \beta, \Sigma), \cdot \right).
\]
We will use the following drift function defined by
\[
V(\A, \theta, \beta)
= 
\frac{1}{2} \sum_{i = 1}^n (\A_i - d_i)^T D_{i}^{-1} (\A_i - d_i)
+ \frac{1}{2} (\theta, \beta) C_{0}^{-1} (\theta, \beta)^T
\]
combined with a minorization condition to show there is a $\rho \in (0, 1)$ and $M_0 \in (0, \infty)$ so that for any initialization $\A, \theta, \beta, \Sigma$,
\begin{align}
\sup_{|\phi| \le 1 + M_0 V} \left| \int \phi dP^t\left( (\A, \theta, \beta, \Sigma), \cdot \right) - \int \phi d\Pi_{n} \right|
\le M(\A, \theta, \beta) \rho^t
\label{eq:V_convergence}
\end{align}
where $M(\A, \theta, \beta) = 2 + M_0 V(\A, \theta, \beta) + M_0 \int V d\Pi_n$ \citep{Hairer2011}.
The condition \eqref{eq:V_convergence} implies the Gibbs sampler is geometrically ergodic.
We now state our main result.

\begin{theorem}
\label{theorem:ge}
The  3-variable deterministic scan Gibbs sampler $(\A_t, \theta_t, \beta_t, \Sigma_t)_{t = 0}^{\infty}$ for the general density \eqref{eq:general_posterior} is geometrically ergodic.
\end{theorem}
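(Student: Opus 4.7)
My plan is to follow the standard drift-and-minorization approach \citep{Hairer2011, Meyn2009} using the function $V$ displayed just above the theorem as the Lyapunov function and sublevel sets $\{V \le r\}$ as small sets. The main step is to exhibit $\lambda \in (0, 1)$ and $b < \infty$ with $PV \le \lambda V + b$. Because $V$ does not depend on $\Sigma$ and the Gibbs scan updates $\Sigma_t$, then $(\theta_t, \beta_t)$, then $\A_t$, I would evaluate $PV$ by three successive conditional expectations taken in the reverse of the scan order. At the innermost layer $\A_{i,t} | \theta_t, \beta_t, \Sigma_t$ is Gaussian, so
\[
\E\bigl[(\A_{i,t} - d_i)^T D_i^{-1}(\A_{i,t} - d_i) \mid \theta_t, \beta_t, \Sigma_t\bigr] = \tr(D_i^{-1} D_{n,i}) + (d_{n,i} - d_i)^T D_i^{-1}(d_{n,i} - d_i).
\]
The algebraic key is the Woodbury-type identity $D_{n,i}^{-1} = D_i^{-1} + \B_t \Sigma_t^{-1} \B_t^T$, which gives $\tr(D_i^{-1} D_{n,i}) \le p$ and the shrinkage representation $d_{n,i} - d_i = D_{n,i} \B_t \Sigma_t^{-1}(R_i - \Theta_t^T M_i - \B_t^T d_i)$. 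An analogous identity $C_n^{-1} = C_0^{-1} + \Sigma_t^{-1} \otimes \mat{M & \A}^T \mat{M & \A}$ controls the $(\theta, \beta)$ layer. Because every precision matrix here is strictly inflated by a positive semidefinite term, the corresponding covariance is strictly dominated by the prior covariance, which supplies the strict contraction in $V$.

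\textbf{Outer expectation and minorization.} The outermost expectation is over $\Sigma_t$, which is inverse-Wishart with scale $(R - M\Theta - \A\B)^T(R - M\Theta - \A\B) + B_0 \succeq B_0$. Monotonicity of matrix inversion and the inverse-Wishart mean formula give the uniform bound $\E[\Sigma_t^{-1} \mid \A, \theta, \beta] \preceq (n + a_0) B_0^{-1}$, independent of the initialization, which lets me absorb the $\Sigma_t^{-1}$ factors appearing through the shrinkage representations into the constant $b$. For the minorization, on a sublevel set $C_r = \{V \le r\}$ the coordinates $(\A, \theta, \beta)$ are bounded, and the Gibbs density factors into a Gaussian in $\A'$, a Gaussian in $(\theta', \beta')$, and an inverse-Wishart in $\Sigma'$, each continuous and strictly positive in its arguments. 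Restricting the target $(\A', \theta', \beta', \Sigma')$ to a bounded box yields $P((\A, \theta, \beta, \Sigma), \cdot) \ge \varepsilon\, \nu(\cdot)$ uniformly over $C_r$ for some probability measure $\nu$. Combined with the drift inequality, this gives \eqref{eq:V_convergence} and hence geometric ergodicity.

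\textbf{Main obstacle.} I expect the delicate part to be the drift step, specifically combining the two Woodbury contractions with the outer inverse-Wishart expectation while tracking cross terms that couple $\Sigma$, $\B$, and the data. The shrinkage matrices $D_{n,i}$ and $C_n$ themselves depend on the current Gibbs state, so extracting a contraction factor $\lambda$ that does not depend on initialization requires using the uniform inverse-Wishart moment bound together with the PSD inequalities $D_{n,i} \preceq D_i$ and $C_n \preceq C_0$ carefully, and bounding the resulting cross terms by $V$ itself (plus constants) via Cauchy--Schwarz on the quadratic forms defining $V$.
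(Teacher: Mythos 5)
Your overall strategy is the same as the paper's: the same drift function $V$, a minorization on its compact sublevel sets obtained from continuity and strict positivity of the transition density (legitimate even though $\{V\le r\}$ is unbounded in $\Sigma$, because the kernel does not depend on the current $\Sigma$ — the paper makes this explicit via the marginal-chain/de-initializing argument), the Gaussian conditional second-moment formula with the Woodbury/shrinkage representations of $d_{n,i}$ and $c_n$, and the uniform inverse-Wishart moment bound coming from the scale matrix being $\succeq B_0$. So the ingredients you list are exactly the right ones.

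The one place your plan as written would not go through is the mechanism you give for the contraction. The inequalities $D_{n,i}\preceq D_i$ and $C_n\preceq C_0$ are not \emph{uniformly} strict: the inflation terms $\B\Sigma^{-1}\B^T$ and $\Sigma^{-1}\otimes\mat{M & \A}^T\mat{M & \A}$ can be arbitrarily close to zero (small $\B$, large $\Sigma$), so ``covariance strictly dominated by the prior covariance'' cannot by itself yield a state-independent $\lambda<1$, and the Cauchy--Schwarz bookkeeping of cross terms against $V$ that you anticipate as the main obstacle is in fact never needed. What closes the drift in the paper is the scalar inequality $x/(x^2+a)\le 1/(2\sqrt{a})$ applied through an SVD of $D_i^{1/2}\B\Sigma^{-1/2}$ (respectively of $\Sigma^{-1/2}\otimes\mat{M & \A_0}C_0^{1/2}$): it bounds the mean-shift terms by $\norm{\Sigma^{-1}}_2/4$ times purely data-dependent quantities, so the dependence on the current state $(\A_0,\theta_0,\beta_0)$ vanishes from the conditional bounds altogether. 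After integrating over the inverse-Wishart draw — for which you need the \emph{second} Wishart moment as well as the mean, since the two Gaussian layers each contribute a factor of $\norm{\Sigma^{-1}}_2$ — one obtains a uniform bound of the form $\E\left[V(\A,\theta,\beta)\,\middle|\,\A_0,\theta_0,\beta_0\right]\le L$, i.e.\ a drift with $\lambda=0$, which combined with your minorization on $\{V\le r\}$ gives geometric ergodicity. So to repair your argument, replace the strict-domination heuristic by the singular-value shrinkage bound and add the uniform bound on $\E\norm{\Sigma^{-1}}_2^2$.
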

\begin{proof} 
Using a special property of the Gibbs sampler, it will be sufficient to develop a drift and minorization condition based only on the marginal chain $(\A_t, \theta_t, \beta_t)_t$ \citep[Example 3.6]{Roberts2001_denitialize}.
In particular, we will use the special property of this Gibbs Markov kernel $P$ that for suitable sets $B$, $P(\cdot, B)$ is a function of only the parameters $(\A, \theta, \beta)$ and does not depend on $\Sigma$.
We first show a minorization condition.
For $\l \in (0, \infty)$, define the function $g_r$ by 
\[
g_r(\A', \theta', \beta', {\Sigma}')
= \inf_{V(\A, \theta, \beta) \le \l} \pi_{n}(\A' | \theta', \beta', {\Sigma}')
\pi_{n}(\theta', \beta' | \A, {\Sigma}')
\pi_{n}({\Sigma}' | \A, \theta, \beta)
\]
and the constant
$
Z_{g_\l} 
= \int g_\l(\A', \theta', \beta', {\Sigma}')
d\A' d\theta' d\beta' d{\Sigma}'.
$
The drift function $V$ is a continuous, strongly convex function on a closed, convex domain so its sublevel sets are closed and bounded \citep[Corollary 3.2.2]{Nesterov2018}.
For fixed $\theta', \beta', {\Sigma}'$, the function 
\[
(\A, \theta, \beta) \mapsto \pi_n(\theta', \beta' | \A, {\Sigma}') \pi_n({\Sigma}' | \A, \theta, \beta)
\]
is continuous and achieves its minimum over compact sets.
Thus, $Z_{g_\l}$ is not $0$ and we can define the probability measure \[
\nu_\l(\cdot) = Z_{g_\l}^{-1} \int_{\cdot} g_\l(\A', \theta', \beta', {\Sigma}')) d\A' d\theta' d\beta' d{\Sigma}'.
\]
For any $\l \in (0, \infty)$ and any suitable set $B$,
\begin{align*}
&\inf_{
\substack{
\Sigma \in \R^{m \times m},
\\
V(\A, \theta, \beta) \le \l
}
} P\left( (\A, \theta, \beta, \Sigma), B \right)
\\
&= \inf_{V(\A, \theta, \beta) \le \l}
\int_B \pi_{n}(\A' | \theta', \beta', {\Sigma}')
\pi_{n}(\theta', \beta' | \A, {\Sigma}')
\pi_{n}({\Sigma}' | \A, \theta, \beta) d\A' d\theta' d\beta' d{\Sigma}'
\\
&\ge
\int_{B} g_\l(\A', \theta', \beta', {\Sigma}') 
d\A' d\theta' d\beta' d{\Sigma}'
\\
&= Z_{g_\l} \nu_\l(B).
\end{align*}

It remains to show a drift condition.
Fix $\A_0, \theta_0, \beta_0$, and fix $i \in 1, \ldots, n$.
Since $D_{i}$ is SPD, let $D_{i} = D_{i}^{1/2} D_{i}^{1/2}$, $D_{i}^{-1} = D_{i}^{-1/2} D_{i}^{-1/2}$ where $D_{i}^{1/2}, D_{i}^{-1/2}$ are SPD.
Using the identity
\[
d_{n, i}(\theta, \beta, \Sigma) 
= d_i + 
\left( \B \Sigma^{-1} \B^T + D_{i}^{-1} \right)^{-1}
\B \Sigma^{-1} \left( R_i  - \Theta^T M_i - \B^T d_i \right)
\]
and taking the expectation with respect to $\A_i | \A_0, \theta_0, \beta_0, \theta, \beta, \Sigma$
\begin{subequations}
\label{eq:expectation_A_norm}
\begin{align}
&\E\left[ \frac{1}{2} \norm{D_{i}^{-1/2} (\A_i - d_i)}_2^2 \big| \A_0, \theta_0, \beta_0, \theta, \beta, \Sigma 
\right]
\\
&= \frac{1}{2} \norm{D_{i}^{-1/2} \left( \B \Sigma^{-1} \B^T + D_{i}^{-1} \right)^{-1} 
\B \Sigma^{-1} \left( R_i  -  \Theta^T M_i - \B^T d_i \right)
}_2^2 
\\
&\hspace{.4cm}+ 
\frac{1}{2} \tr\left[ D_{i}^{-1/2} \left( \B \Sigma^{-1} \B^T + D_{i}^{-1} \right)^{-1} D_{i}^{-1/2} \right].
\end{align}
\end{subequations}

Using singular value decomposition from \citep[Theorem 2.6.3]{Horn2012}, choose matrices $U_i \in \R^{p \times p}$, $V_i \in \R^{m \times m}$ with $U_i^T U_i = U_i U_i^T = I_p$ and $V_i^T V_i$ $= V_i V_i^T = I_{m}$ and a rectangular diagonal matrix $\Sigma_{i} \in \R^{p \times m}$ with diagonal nonnegative singular values $(\sigma_{i, k})_{k}$ so that $D_i^{1/2} \B \Sigma^{-1/2} = U_i \Sigma_{i} V_i^T$.
Then
\begin{align}
\left( \B \Sigma^{-1} \B^T + D_{i}^{-1} \right)^{-1}
&= D_{i}^{1/2} (D_{i}^{1/2} \B \Sigma^{-1/2}) (D_{i}^{1/2} \B \Sigma^{-1/2})^T + I_p )^{-1} D_{i}^{1/2} \nonumber
\\
&= D_{i}^{1/2} U_i (\Sigma_{i} \Sigma_{i}^T + I_p )^{-1} U_i^T D_{i}^{1/2}.
\label{eq:Dbeta_identity}
\end{align}
Using \eqref{eq:Dbeta_identity} and properties of the trace
\begin{align*} 
\frac{1}{2} \tr\left[ D_{i}^{-1/2} \left( \B \Sigma^{-1} \B^T + D_{i}^{-1} \right)^{-1} D_{i}^{-1/2} \right]
&= \frac{1}{2} \tr\left[ 
\left( \Sigma_{i} \Sigma_{i}^T + I_p \right)^{-1} 
U_i^T U_i 
\right]
\\
&\le \frac{p}{2}.
\end{align*}
For $x \in [0, \infty)$ and $a \in (0, \infty)$, we have the inequality

\begin{align}
\frac{x}{x^2 + a} \le \frac{1}{2\sqrt{a}}.
\label{eq:inequality_ub}
\end{align}

Using inequalities \eqref{eq:Dbeta_identity} and \eqref{eq:inequality_ub}, the matrix norm is sub-multiplicative, and $\norm{U_i}_2 = 1$, we have
\begin{align*}
&\norm{
D_{i}^{-1/2} \left( \B \Sigma^{-1} \B^T + D_{i}^{-1} \right)^{-1} \B \Sigma^{-1/2} \Sigma^{-1/2}
}_2^2
\\
&= \norm{ U_i (\Sigma_{i} \Sigma_{i}^T + I_p )^{-1} U_i^T D_{i}^{1/2} \B \Sigma^{-1/2} \Sigma^{-1/2}}_2^2
\\
&\le \norm{(\Sigma_{i} \Sigma_{i}^T + I_p )^{-1} \Sigma_{i} \Sigma^{-1/2}}_2^2
\\
&\le \left[ \frac{ \sigma_i }{ \sigma_i^2 + 1 } \right]^2
\norm{\Sigma^{-1/2}}_2^2
\\
&\le \frac{\norm{\Sigma^{-1}}_2}{4}.
\end{align*}

Define the matrix $\tilde{X} = (d_1, \ldots, d_n)^T$.
Applying these upper bounds to \eqref{eq:expectation_A_norm} and combining for each $i \in 1, \ldots, n$,
\begin{align*}
&\E\left[
\frac{1}{2} \sum_{i = 1}^n \norm{ D_{i}^{-1/2} (\A_i - d_i) }_2^2 
\big| \A_0, \theta_0, \beta_0, \theta, \beta, \Sigma 
\right]
\\
&\le \frac{\norm{\Sigma^{-1}}_2}{8} \norm{ R - M\Theta - \tilde{X} \B }_F^2
+ \frac{p n}{2}.
\end{align*}
By convexity, for every $x, y$, 
$
\norm{x - y}_2^2 \le 2\norm{x}_2^2 + 2\norm{y}_2^2.
$
Since $C_{0}$ is SPD, let $C_{0} = C_{0}^{1/2} C_{0}^{1/2}$, $C_{0}^{-1} = C_{0}^{-1/2} C_{0}^{-1/2}$ where $C_{0}^{1/2}, C_{0}^{-1/2}$ are SPD.
Using convexity, and the matrix norm is sub-multiplicative, we have
\begin{align*}
\frac{1}{2} \norm{ R - M\Theta - \tilde{X} \B }_F^2
&\le \norm{ R }_F^2
+ \norm{ \mat{ M & \tilde{X} } C_0^{1/2}}_2^2 \norm{ C_0^{-1/2} (\theta, \beta)^T }_2^2.
\end{align*}
Therefore,
\begin{subequations}
\label{eq:drift_bound_1}
\begin{align}
&\E\left[
\frac{1}{2} \sum_{i = 1}^n \norm{ D_{i}^{-1/2} (\A_i - d_i) }_2^2 
\big| \A_0, \theta_0, \beta_0, \theta, \beta, \Sigma 
\right]
\\
&\le \frac{\norm{\Sigma^{-1}}_2}{8} \norm{ R - M \Theta - \tilde{X} \B }_F^2
+ \frac{p n}{2}
\\
&\le \norm{\Sigma^{-1}}_2 \frac{\norm{ R }_F^2}{4}
+ \norm{\Sigma^{-1}}_2 \frac{\norm{\mat{ M & \tilde{X} } C_0^{1/2}}_2^2}{4} \norm{ C_0^{-1/2} (\theta, \beta)^T }_2^2
+ \frac{p n}{2}.
\end{align}
\end{subequations}

Now taking the expectation with respect to $\theta, \beta | \A_0, \theta_0, \beta_0, \Sigma$
\begin{align*}
&\E \left[ \frac{1}{2} \norm{C_{0}^{-1/2} (\theta, \beta)^T}_2^2 \big| \Sigma, \A_0, \theta_0, \beta_0 \right]
\\
&= \frac{1}{2} 
\norm{
C_{0}^{-1/2} 
c_n(\A_0, \Sigma)
}_2^2 
+ \frac{1}{2} \tr(C_{0}^{-1/2} C_n(\A_0, \Sigma) C_{0}^{-1/2}).
\end{align*}
Using singular value decomposition \citep[Theorem 2.6.3]{Horn2012}, choose matrices $U \in \R^{mn \times mn}$, $V \in \R^{m(p + q) \times m(p + q)}$ with $U^T U = U U^T = I_{mn}$ and  $V^T V = V V^T = I_{m(p + q)}$ and a rectangular diagonal matrix $\Sigma_{\A_0} \in \R^{mn \times m(p + q)}$ with diagonal nonnegative singular values $(\sigma_{\A_0, k})_{k}$ so that $\Sigma^{-1/2} \otimes \mat{ M & \A_0 } C_{0}^{1/2} = U \Sigma_{\A_0} V^T$.
We then have
\begin{align}
C_n(\A_0, \Sigma)
&= \left( \Sigma^{-1} \otimes \mat{ M & \A_0 }^T \mat{ M & \A_0 } + C_{0}^{-1} \right)^{-1} \nonumber
\\
&= C_{0}^{1/2} V \left( \Sigma_{\A_0}^T \Sigma_{\A_0} + I_{m(p + q)} \right)^{-1} V^T C_{0}^{1/2}.
\label{eq:C_n_identity}
\end{align}
Using \eqref{eq:C_n_identity} and properties of the trace
\begin{align*}
\frac{1}{2} \tr(C_{0}^{-1/2} C_n(\A_0, \Sigma) C_{0}^{-1/2})
&\le \frac{1}{2} \max_{k} \left[ \left( \sigma_{\A_0, k}^2 + 1 \right)^{-1} \right] \tr(V^T V)
\\
&\le \frac{m (p + q)}{2}.
\end{align*}
Using convexity,
\begin{align*}
&\frac{1}{2} \norm{
C_{0}^{-1/2} 
c_n(\A_0, \Sigma)
}_2^2
\\
&= \frac{1}{2} \norm{C_{0}^{-1/2} C_n(\A_0, \Sigma) 
\left[ 
\left[ \Sigma^{-1} \otimes
\mat{ M & \A_0 }^T \right] \vectorize(R)
+ C_0^{-1} c_0
\right]
}_2^2
\\
&\le \norm{
C_{0}^{-1/2} C_n(\A_0, \Sigma) 
\left[ \Sigma^{-1/2} \otimes \mat{ M & \A_0 }^T  \right]
\left[ \Sigma^{-1/2} \otimes I_{mn} \right] 
}_2^2 
\norm{R}_F^2
\\
&\hspace{.5cm}+ \norm{
C_{0}^{-1/2} C_n(\A_0, \Sigma) C_0^{-1} c_0
}_2^2.
\end{align*}
Using the inequality \eqref{eq:inequality_ub} and the identity \eqref{eq:C_n_identity},
\begin{align*}
&\norm{
C_{0}^{-1/2} C_n(\A_0, \Sigma) 
\left[ \Sigma^{-1/2} \otimes \mat{ M & \A_0 } \right]^T
\left[ 
\Sigma^{-1/2} \otimes I_{mn} 
\right]
}_2^2
\\
&= \norm{
V \left( \Sigma_{\A_0}^T \Sigma_{\A_0} + I_{m(p + q)} \right)^{-1} V^T 
\left[ 
\Sigma^{-1/2} \otimes \mat{ M & \A_0 } C_0^{1/2}
\right]^T
\left[ 
\Sigma^{-1/2} \otimes I_{mn} 
\right]
}_2^2
\\
&= 
\norm{
V \left( \Sigma_{\A_0}^T \Sigma_{\A_0} + I_{m(p + q)} \right)^{-1} \Sigma_{\A_0}^T U^T
}_2^2
\norm{\Sigma^{-1}}_2
\\
&\le \norm{V}_2^2 \norm{
\left( \Sigma_{\A_0}^T \Sigma_{\A_0} + I_{m(p + q)} \right)^{-1} 
\Sigma_{\A_0}^T}_2^2 \norm{U^T}_2^2
\norm{\Sigma^{-1}}_2
\\
&\le \max_{k} \left( \frac{\sigma_{\A_0, k}}{\sigma_{\A_0, k}^2 + 1} \right)^2
\norm{\Sigma^{-1}}_2
\\
&\le \frac{\norm{\Sigma^{-1}}_2}{4}.
\end{align*}
Using \eqref{eq:C_n_identity},
\begin{align*}
\norm{
C_{0}^{-1/2} C_n(\A_0, \Sigma) C_0^{-1} c_0
}_2^2
&= \norm{
V \left( \Sigma_{\A_0}^T \Sigma_{\A_0} + I_{m(p + q)} \right)^{-1} V^T C_0^{-1/2} c_0
}_2^2
\\
&\le \norm{V}_2^2 \norm{
\left( \Sigma_{\A_0}^T \Sigma_{\A_0} 
+ I_{m(p + q)} \right)^{-1} }_2^2 \norm{V^T}_2^2
c_0^T C_0^{-1} c_0
\\
&\le c_0^T C_0^{-1} c_0.
\end{align*}
Combining the upper bounds
\begin{align}
\E \left[ \frac{1}{2} \norm{ C_{0}^{-1/2} (\theta, \beta)^T }_2^2 \big| \Sigma, \A_0, \theta_0, \beta_0 \right]
&\le
\frac{\norm{R}_F^2}{4} \norm{\Sigma^{-1}}_2
+ c_0^T C_0^{-1} c_0
+ \frac{m(p + q)}{2}.
\label{eq:drift_bound_2}
\end{align}

Now using \eqref{eq:drift_bound_1} and \eqref{eq:drift_bound_2} and taking the iterated expectation with respect to $\theta, \beta | \A_0, \theta_0, \beta_0, \Sigma$,
\begin{subequations}
\label{eq:drift_bound_3}
\begin{align}
&\E\left[ \frac{1}{2} \sum_{i = 1}^n \norm{ D_{i}^{-1/2} (\A_i - d_i) }_2^2
\big| \Sigma, \A_0, \theta_0, \beta_0 \right]
\\
&\le \norm{\Sigma^{-1}}_2
\\
&\hspace{.4cm}\times 
\left[ 
\frac{\norm{ R }_F^2}{4}
+ \frac{\norm{ \mat{ M & \tilde{X} } C_0^{1/2}}_2^2 m (p + q)}{4}
+ \frac{\norm{\mat{ M & \tilde{X} } C_0^{1/2}}_2^2}{2} c_0^T C_0^{-1} c_0
\right] 
\\
&\hspace{.4cm}+ \norm{\Sigma^{-1}}_2^2
\frac{\norm{ \mat{ M & \tilde{X} } C_0^{1/2}}_2^2 \norm{R}_F^2}{8}
+ \frac{p n}{2}.
\end{align}
\end{subequations}
Since $\Sigma^{-1}$ has a Wishart distribution and using properties of the trace,
\begin{align*}
\E \norm{\Sigma^{-1}}_2
&\le \tr\left[ 
\E\left( \Sigma^{-1} \right) \right]
\\
&= (n + a_0) \tr\left[
\left( ( R - M \Theta_0 - \A_0 \B_0 )^T ( R - M \Theta_0 - \A_0 \B_0 )
+ B_0 \right)^{-1}
\right]
\\
&\le (n + a_0) tr[B_0^{-1}].
\end{align*}
Similarly, we use the second moment formula of the Wishart \citep{Letac:Massam:2004} to get the upper bound,
\begin{align*}
\E \norm{\Sigma^{-1}}_2^2
&\le \tr\left[ \E\left( \Sigma^{-2} \right) \right]
\\
&= (n + a_0) 
\tr\left[
\left( ( R - M \Theta_0 - \A_0 \B_0 ) ( R - M \Theta_0 - \A_0 \B_0 )^T
+ B_0 \right)^{-1}
\right]^2
\\
&\hspace{.5cm}+ (n + a_0) (n + a_0 + 1) 
\\
&\hspace{.5cm}\times
\tr\left[
\left( ( R - M \Theta_0 - \A_0 \B_0 ) ( R - M \Theta_0 - \A_0 \B_0 )^T
+ B_0 \right)^{-2}
\right]
\\
&\le (n + a_0) (n + a_0 + 1)  tr[B_0^{-1}]^2.
\end{align*}

Taking the iterated expectation with respect to $\Sigma | \A_0, \theta_0, \beta_0$ in \eqref{eq:drift_bound_3}, there is a constant $L \in (0, \infty)$ so that the drift condition is satisfied with
\[
\E\left[ V(\A, \theta, \beta) 
| \A_0, \theta_0, \beta_0 \right]
\le L.
\]
\end{proof}

\subsection{Bayesian EIV regression with errors in the features}

Using Theorem~\ref{theorem:ge}, we develop geometrically ergodic Gibbs samplers for Bayesian EIV regression with additive Gaussian error in the features.
For the remainder, we write the observed data as $Y = (y_1, \ldots, y_n)^T \in \R^{n \times m}$, $X = (x_1, \ldots, x_n)^T \in \R^{n \times p}$, and $Z = (Z_1, \ldots, Z_n)^T \in \R^{n \times q}$.
Consider the Bayesian EIV regression \eqref{eq:EIV_regression_model_X} with Berkson errors and priors \eqref{eq:EIV_priors} and \eqref{eq:EIV_priors_A}.
We will write the posterior density $\pi_{n}$ for this Bayesian model as
\begin{align*}
&\pi_{n}(\A, \theta, \beta, \Sigma)
\\
&\propto
\left( \frac{1}{\det(\Sigma)} \right)^{(n + a_0 + m + 1)/2}
\exp\left[
- \frac{1}{2} \tr[ \Sigma^{-1} B_0 ]
\right]
\\
&\hspace{.5cm}
\times
\exp\left[
- \frac{1}{2}
\sum_{i = 1}^n ( y_i - \Theta^T Z_i - \B^T \A_{i} )^T \Sigma^{-1} ( y_i - \Theta^T Z_i - \B^T \A_{i} )
\right]
\\
&\hspace{.5cm}
\times 
\exp\left( -\frac{1}{2} \sum_{i = 1}^n (\A_i - x_i)^T V_i^{-1} (\A_i - x_i) \right)
\\
&\hspace{.5cm}
\times 
\exp\left( -\frac{1}{2} ((\theta, \beta)^T - j_0)^T J_0^{-1} ((\theta, \beta)^T - j_0) \right).
\end{align*}
This posterior density is a special case of the general density \eqref{eq:general_posterior} choosing $M \equiv Z$, $R \equiv Y$, $c_0, C_0 \equiv j_0, J_0$, and $d_i, D_i \equiv x_i, V_i$.

We can define a 3-variable deterministic scan Gibbs sampler which generates a Markov chain $(\A_t, \theta_t, \beta_t, \Sigma_t)_{t=0}^{\infty}$ for this posterior density as a special case of the Gibbs sampler constructed in Section~\ref{section:gibbs_sampler}.
Initialize $(\A_0, \theta_0, \beta_0, \Sigma_0)$ and for $t \in 1, \ldots$,
\begin{enumerate} 
\item Generate 
$
\Sigma_t | \A_{t-1}, \theta_{t-1}, \beta_{t-1}
\sim \W^{-1}\left(
n + a_0, 
B_{n, t}
\right)
$
where 
\[
B_{n, t}
= ( Y - Z \Theta_{t-1} - \A_{t-1} \B_{t-1} )^T (Y - Z \Theta_{t-1} - \A_{t-1} \B_{t-1} )
+ B_0
\]
\item Generate 
$
(\theta_t, \beta_t)^T | \A_{t-1}, \Sigma_{t}
\sim N_{m(p + q)}(
j_{n, t},
J_{n, t}
)
$
where
\begin{align*}
&J_{n, t}
= \left( 
\Sigma^{-1}_{t} \otimes \mat{ Z & \A_{t-1} }^T \mat{ Z & \A_{t-1} } + J_{0}^{-1} 
\right)^{-1}
\\
&j_{n, t}
= J_{n, t} \left[ 
\left[ \Sigma^{-1}_{t} \otimes \mat{ Z & \A_{t-1} }^T \right] \vectorize(Y) 
+ J_0^{-1} j_0
\right]
\end{align*}

\item Generate 
$
\A_{i, t} | \theta_{t}, \beta_{t}, \Sigma_{t}
\sim N_p(
k_{n, i, t},
K_{n, i, t}
)
, i \in 1, \ldots, n
$
where
\begin{align*}
&K_{n, i, t}
= \left( \B_{t} \Sigma^{-1}_t \B_{t}^T + V_{i}^{-1} \right)^{-1}
\\
&k_{n, i, t}
= K_{n, i, t} 
\left[ V_i^{-1} x_i + \B_{t} \Sigma^{-1}_{t} \left( y_i  - \Theta_{t}^T Z_i \right) \right].
\end{align*}
\end{enumerate}
Applying Theorem~\ref{theorem:ge}, we have the following result.
\begin{corollary}
The 3-variable Gibbs sampler $(\A_t, \theta_t, \beta_t, \Sigma_t)_{t=0}^{\infty}$ for the posterior in Bayesian EIV regression \eqref{eq:EIV_regression_model_X} with Berkson errors and priors \eqref{eq:EIV_priors} and \eqref{eq:EIV_priors_A} is geometrically ergodic.
\end{corollary}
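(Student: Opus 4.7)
The plan is to recognize this corollary as a direct specialization of Theorem~\ref{theorem:ge}. The first step is to write the Berkson posterior density explicitly and match its structure term-by-term against the general density \eqref{eq:general_posterior}. The exponent in $\Sigma^{-1}$ already has the correct form $(n + a_0 + m + 1)/2$, the inverse-Wishart trace term $\tr(\Sigma^{-1} B_0)$ is the same, and the likelihood quadratic form $(y_i - \Theta^T Z_i - \B^T \A_i)^T \Sigma^{-1}(y_i - \Theta^T Z_i - \B^T \A_i)$ matches the generic $(R_i - \Theta^T M_i - \B^T \A_i)^T \Sigma^{-1}(R_i - \Theta^T M_i - \B^T \A_i)$ under the identifications $R \equiv Y$ and $M \equiv Z$.

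Next I would identify the prior contributions. The Berkson $\A_i$ prior contribution is $(\A_i - x_i)^T V_i^{-1} (\A_i - x_i)$, which matches the generic $(\A_i - d_i)^T D_i^{-1}(\A_i - d_i)$ under $d_i \equiv x_i$ and $D_i \equiv V_i$; note $V_i$ is SPD by assumption, so $D_i$ is SPD as required. Similarly the regression-coefficient prior $((\theta, \beta)^T - j_0)^T J_0^{-1}((\theta, \beta)^T - j_0)$ matches under $c_0 \equiv j_0$ and $C_0 \equiv J_0$, with $J_0$ SPD as assumed in \eqref{eq:EIV_priors}. The hyperparameter $a_0 \ge m$ is a positive integer and $B_0$ is SPD, so all regularity conditions on hyperparameters in the statement of the general density are met.

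Having confirmed that the Berkson posterior is an instance of \eqref{eq:general_posterior}, I would then observe that the 3-variable deterministic scan Gibbs sampler described in the statement of the corollary is exactly the algorithm constructed in Section~\ref{section:gibbs_sampler} under these identifications: the updates for $\Sigma_t$, $(\theta_t, \beta_t)^T$, and $\A_{i,t}$ reproduce the Wishart, Gaussian, and Gaussian conditionals derived there, after substituting $(Z, Y, j_0, J_0, x_i, V_i)$ for $(M, R, c_0, C_0, d_i, D_i)$. Geometric ergodicity then follows immediately from Theorem~\ref{theorem:ge}.

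The main obstacle, if any, is simply the bookkeeping of the parameter correspondence, and verifying that the generic hyperparameter hypotheses of Theorem~\ref{theorem:ge} (SPD, positive integer degrees of freedom) are in fact all imposed in \eqref{eq:EIV_priors} and \eqref{eq:EIV_priors_A}. Since all of those hypotheses are explicitly assumed in the Berkson setup, the proof is a brief verification rather than new analysis.
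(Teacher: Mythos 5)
Your proposal matches the paper's argument exactly: the paper also derives the Berkson posterior, identifies it as the special case of the general density \eqref{eq:general_posterior} with $M \equiv Z$, $R \equiv Y$, $c_0, C_0 \equiv j_0, J_0$, $d_i, D_i \equiv x_i, V_i$, and then applies Theorem~\ref{theorem:ge}. Your verification of the SPD and hyperparameter hypotheses is correct bookkeeping, so the proposal is sound and essentially identical to the paper's proof.
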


Now consider Bayesian EIV regression \eqref{eq:EIV_regression_model_X} with additive Gaussian error in $X_i$ using classical errors and priors \eqref{eq:EIV_priors} and \eqref{eq:EIV_priors_A}.
The posterior has density
\begin{align*}
&\pi_{n}(\A, \theta, \beta, \Sigma)
\\
&\propto
\left( \frac{1}{\det(\Sigma)} \right)^{(n + a_0 + m + 1)/2}
\exp\left[
- \frac{1}{2} \tr[\Sigma^{-1} B_0]
\right]
\\
&\hspace{.5cm}
\times
\exp\left[
- \frac{1}{2} \sum_{i = 1}^n ( y_i - \Theta^T Z_i - \B^T \A_i )^T \Sigma^{-1}
( y_i - \Theta^T Z_i - \B^T \A_i )
\right]
\\
&\hspace{.5cm}
\times
\exp\left( -\frac{1}{2} \sum_{i = 1}^n  (\A_i - k_i')^T (V_{i}^{-1} + K_i^{-1}) (\A_i - k_i') \right)
\\
&\hspace{.5cm}
\times 
\exp\left( -\frac{1}{2} ((\theta, \beta)^T - j_0)^T J_0^{-1} ((\theta, \beta)^T - j_0) \right)
\end{align*}
where
$
k_i' = (V_{i}^{-1} + K_i^{-1})^{-1} 
\left[ V_{i}^{-1} x_i + K_i^{-1} k_i
\right].
$
The posterior density is also a special case of the general density \eqref{eq:general_posterior} when $Z \equiv M$, $R \equiv Y$, and $c_0, C_0 \equiv j_0, J_0$, and $d_i, D_i \equiv k_i', (V_{i}^{-1} + K_i^{-1})^{-1}$.

We define a 3-variable deterministic scan Gibbs sampler similarly. 
Initialize $(\A_0, \theta_0, \beta_0, \Sigma_0)$ and for $t \in 1, \ldots$,
\begin{enumerate} 
\item Generate 
$
\Sigma_t | \A_{t-1}, \theta_{t-1}, \beta_{t-1}
\sim \W^{-1}\left( 
n + a_0, 
B_{n, t}
\right)
$

\item Generate 
$
(\theta_t, \beta_t)^T | \A_{t-1}, \Sigma_{t}
\sim N_{m(p + q)}(
j_{n, t},
J_{n, t}
)
$

\item Generate 
$
\A_{i, t} | \theta_{t}, \beta_{t}, \Sigma_{t}
\sim N_p(
k'_{n, i, t},
K'_{n, i, t}
)
, i \in 1, \ldots, n
$
where
\begin{align*}
&K'_{n, i, t}
= \left( \B_{t} \Sigma^{-1}_{t} \B_{t}^T + V_{i}^{-1} + K_i^{-1} \right)^{-1}
\\
&k'_{n, i, t}
= K'_{n, i, t} 
\left[ V_i^{-1} x_i + K_i^{-1} k_i + \B_{t} \Sigma^{-1}_{t} \left( y_i  - \Theta_{t}^T Z_i \right) \right].
\end{align*}
\end{enumerate}
We also have the following as a direct result of Theorem~\ref{theorem:ge}.
\begin{corollary}
The 3-variable Gibbs sampler $(\A_t, \theta_t, \beta_t, \Sigma_t)_{t=0}^{\infty}$ for the posterior in Bayesian EIV regression \eqref{eq:EIV_regression_model_X} with classical errors and priors \eqref{eq:EIV_priors} and \eqref{eq:EIV_priors_A} is geometrically ergodic.
\end{corollary}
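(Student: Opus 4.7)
The plan is simply to verify that the posterior density displayed for the Bayesian EIV regression model with classical errors matches the general density \eqref{eq:general_posterior} under the substitution $M \equiv Z$, $R \equiv Y$, $(c_0, C_0) \equiv (j_0, J_0)$, $d_i \equiv k_i'$, and $D_i \equiv (V_i^{-1} + K_i^{-1})^{-1}$, and then invoke Theorem~\ref{theorem:ge}.

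The only nontrivial algebra is to show that combining the classical error likelihood $X_i \mid \A_i \sim N_p(\A_i, V_i)$ with the normal prior $\A_i \sim N_p(k_i, K_i)$ produces, up to a constant not depending on $\A_i$, the quadratic form $\exp\bigl\{-\tfrac12 (\A_i - k_i')^T (V_i^{-1} + K_i^{-1})(\A_i - k_i')\bigr\}$. I would expand both Gaussian exponents, collect the quadratic, linear, and constant terms in $\A_i$, and complete the square; the coefficient of the quadratic term is immediately $V_i^{-1} + K_i^{-1}$, and matching the linear term gives $k_i' = (V_i^{-1} + K_i^{-1})^{-1}(V_i^{-1} x_i + K_i^{-1} k_i)$, as stated in the text.

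With this identification in hand, the remaining factors match \eqref{eq:general_posterior} term by term: the inverse-Wishart prior on $\Sigma$ produces the $\det(\Sigma)^{-(n+a_0+m+1)/2} \exp\{-\tfrac12 \tr(\Sigma^{-1} B_0)\}$ factor; the response likelihoods $Y_i \mid \A_i, \theta, \beta, \Sigma \sim N_m(\Theta^T Z_i + \B^T \A_i, \Sigma)$ produce the $\Sigma^{-n/2}$ factor and the quadratic exponent with $M \equiv Z$, $R \equiv Y$; the Gaussian prior on $(\theta, \beta)$ matches with $(c_0, C_0) \equiv (j_0, J_0)$. Since $V_i, K_i$ are SPD, so is $V_i^{-1} + K_i^{-1}$, and hence so is $D_i = (V_i^{-1} + K_i^{-1})^{-1}$; similarly $B_0$ and $J_0$ are SPD, so all hypotheses on the hyper-parameters required by the general density in Section~\ref{section:gibbs_sampler} are satisfied.

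Since the general 3-variable deterministic scan Gibbs sampler constructed in Section~\ref{section:gibbs_sampler} specializes under this identification to exactly the three updates listed before the corollary (the $\Sigma$ update uses $R - M\Theta - \A\B = Y - Z\Theta - \A\B$, giving $B_{n,t}$; the $(\theta,\beta)$ update uses $J_0$ in place of $C_0$; and the $\A_i$ update uses $D_i^{-1} = V_i^{-1} + K_i^{-1}$ and $D_i^{-1} d_i = V_i^{-1} x_i + K_i^{-1} k_i$, producing $K'_{n,i,t}$ and $k'_{n,i,t}$), Theorem~\ref{theorem:ge} applies and yields geometric ergodicity. There is no real obstacle here; the entire content of the corollary lies in recognising the specialization, and the only step that requires work is the completion-of-squares identity for the $\A_i$-marginal factor.
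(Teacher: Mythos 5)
Your proposal is correct and follows essentially the same route as the paper: the paper also establishes this corollary by identifying the classical-error posterior as the special case of the general density \eqref{eq:general_posterior} with $M \equiv Z$, $R \equiv Y$, $(c_0, C_0) \equiv (j_0, J_0)$, $d_i \equiv k_i'$, $D_i \equiv (V_i^{-1} + K_i^{-1})^{-1}$ (the completion-of-squares step you spell out being exactly what yields $k_i'$), and then invoking Theorem~\ref{theorem:ge}.
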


\subsection{Bayesian EIV regression with errors in the response and features}

Similar to the previous section, we develop geometrically ergodic Gibbs samplers for Bayesian EIV regression with additional additive Gaussian error in the features and response.
Consider the Bayesian EIV regression \eqref{eq:EIV_regression_model_X_Y} with Berkson errors in $X_i$ and additional error in $Y_i$ along with priors \eqref{eq:EIV_priors} and \eqref{eq:EIV_priors_A}.
Let $\mathcal{V} = ( \V_1, \ldots, \V_n)^T \in \R^{n \times m}$ and $\nu = \vectorize(\V)$, and let $U_0 = \text{blockdiag}(U_i) \in \R^{mn \times mn}$.
The Bayesian posterior $\Pi_n$ has density
\begin{align*}
&\pi_{n}(\A, \nu, \theta, \beta, \Sigma)
\\
&\propto \left( \frac{1}{\det(\Sigma)} \right)^{(n + a_0 + m + 1)/2}
\exp\left[
- \frac{1}{2} \tr[\Sigma^{-1} B_0 ]
\right]
\\
&\hspace{.4cm}
\times
\exp\left[
-\frac{1}{2} \sum_{i = 1}^n ( \V_i - \Theta^T Z_i - \B^T \A_i)^T \Sigma^{-1} ( \V_i - \Theta^T Z_i - \B^T \A_i)
\right]
\\
&\hspace{.4cm}
\times \exp\left( -\frac{1}{2} \sum_{i = 1}^n (\V_i - y_i)^T U_i^{-1} (\V_i - y_i) \right)
\\
&\hspace{.4cm}
\times 
\exp\left( -\frac{1}{2} \sum_{i = 1}^n (\A_i - x_i)^T V_i^{-1} (\A_i - x_i) \right)
\\
&\hspace{.4cm}
\times \exp\left( -\frac{1}{2} ((\theta, \beta)^T - j_0)^T J_0^{-1} ((\theta, \beta)^T - j_0) \right).
\end{align*}
This posterior density is a special case of the density \eqref{eq:general_posterior} when redefining $\tilde{\theta} \equiv (\nu, \theta)^T$, $M \equiv \mat{-I & Z}$, $r \equiv 0$, $c_0 = (Y, j_0)^T$,
\[
C_0 \equiv  
\mat{
U_0 & 0 \\
0 & J_0
},
\]
and $d_i, D_i \equiv x_i, V_i$.

We define a 3-variable deterministic scan Gibbs sampler which generates a Markov chain $(\A_t, \nu_t, \theta_t, \beta_t, \Sigma_t)_{t=0}^{\infty}$ for this posterior density.
Initialize $\A_0$, $\nu_0$, $\theta_0$, $\beta_0$, $\Sigma_0$ and for $t \in 1, \ldots$,
\begin{enumerate} 
\item Generate 
$
\Sigma_t | \A_{t-1}, \nu_{t-1}, \theta_{t-1}, \beta_{t-1}
\sim \W^{-1}\left(
n + a_0, 
B'_{n, t}
\right)
$
where 
\[
B_{n, t}'
= ( \V_{t-1} - Z \Theta_{t-1} - \A_{t-1} \B_{t-1} )^T ( \V_{t-1} - Z \Theta_{t-1} - \A_{t-1} \B_{t-1} ) 
+ B_0
\]
\item Generate 
$
(\nu_t, \theta_t, \beta_t)^T | \A_{t-1}, \Sigma_{t}
\sim N_{p + q}(
j'_{n, t},
J'_{n, t}
)
$
where
\begin{align*}
&J'_{n, t}
= \left( 
\Sigma^{-1}_{t} \otimes \mat{ -I & Z & \A_{t-1}}^T \mat{-I & Z & \A_{t-1} }
+ 
\mat{
U_0^{-1} & 0 \\
0 & J_0^{-1}
}
\right)^{-1}
\\
&j'_{n, t}
= J_{n, t}' 
\mat{
U_0^{-1} & 0 \\
0 & J_0^{-1}
}
(\vectorize(Y), j_0)^T
\end{align*}

\item Generate 
$
\A_{i, t} | \nu_t, \theta_{t}, \beta_{t}, \Sigma_{t}
\sim N_p(
k''_{n, i, t},
K''_{n, i, t}
)
, i \in 1, \ldots, n
$
where
\begin{align*}
&K''_{n, i, t}
= \left( \B_{t} \Sigma^{-1}_{t} \B_{t}^T + V_{i}^{-1} \right)^{-1}
\\
&k''_{n, i, t}
= K''_{n, i, t}
\left[ V_i^{-1} x_i +  \B_{t} \Sigma^{-1}_{t} \left( \V_{i, t}  - \Theta_{t}^T Z_i \right) \right].
\end{align*}
\end{enumerate}
Using Theorem~\ref{theorem:ge}, we have the following result.
\begin{corollary}
The 3-variable Gibbs sampler $(\A_t, \nu_t, \theta_t, \beta_t, \Sigma_t)_{t=0}^{\infty}$ for Bayesian EIV regression \eqref{eq:EIV_regression_model_X_Y} with Berkson errors and priors \eqref{eq:EIV_priors} and \eqref{eq:EIV_priors_A} is geometrically ergodic.
\end{corollary}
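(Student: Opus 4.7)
The plan is to deduce this corollary directly from Theorem~\ref{theorem:ge} by checking that the posterior density displayed just above is a special case of the general density \eqref{eq:general_posterior}, and then verifying that the displayed three-step Gibbs sampler coincides with the general Gibbs sampler of Section~\ref{section:gibbs_sampler} under the same specialization. Once both of these matchings are established, geometric ergodicity is immediate from Theorem~\ref{theorem:ge}.

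The first step is the algebraic reparametrization. I would collect the Gaussian factors involving $\V$, $\theta$, $\beta$, $\A$ in the stated posterior and merge $\V$ into the regression block by writing $\tilde\theta = (\nu, \theta)^T$ so that the likelihood term $(\V_i - \Theta^T Z_i - \B^T \A_i)$ is of the form $R_i - \tilde\Theta^T \tilde M_i - \B^T \A_i$ with $R \equiv 0$ and $\tilde M \equiv \mat{-I & Z}$ in block form. Simultaneously, the prior factor $\exp(-\tfrac{1}{2} \sum_i (\V_i - y_i)^T U_i^{-1} (\V_i - y_i))$ coming from $Y_i | \V_i \sim N_m(\V_i, U_i)$ combines with the normal prior on $(\theta, \beta)$ to give a single Gaussian prior on $(\tilde\theta, \beta)^T$ with mean $c_0 = (\vectorize(Y), j_0)^T$ and block-diagonal covariance $C_0 = \mathrm{blockdiag}(U_0, J_0)$. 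Finally, the Berkson-error prior $\A_i | x_i \sim N_p(x_i, V_i)$ matches the general choice $d_i, D_i \equiv x_i, V_i$. After this substitution, the posterior density is literally \eqref{eq:general_posterior}.

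The second step is to check that the three displayed updates for $\Sigma_t$, $(\nu_t, \theta_t, \beta_t)^T$, and $\A_{i,t}$ are exactly the conditional updates of the general Gibbs sampler after the same substitution. This is a mechanical verification: plug $R = 0$, $M = \mat{-I & Z}$, and the block-diagonal $C_0$ into the formulas for $c_n(\A, \Sigma)$, $C_n(\A, \Sigma)$, $d_{n,i}(\theta, \beta, \Sigma)$, $D_{n,i}(\beta, \Sigma)$, and the inverse-Wishart scale matrix. Doing so recovers the displayed $J'_{n,t}$, $j'_{n,t}$, $K''_{n,i,t}$, $k''_{n,i,t}$, and $B'_{n,t}$.

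The main obstacle is the bookkeeping: keeping the Kronecker structure consistent when $\V$ is absorbed into $\tilde\theta$, verifying that the combined $(mn + m(p+q))$-dimensional Gaussian factor is positive definite (which follows because both $U_0$ and $J_0$ are SPD), and confirming that the joint conditional of $(\nu, \theta, \beta)$ given $(\A, \Sigma)$ in the reparametrized model is indeed Gaussian with the claimed mean and covariance. Once these details are in place, Theorem~\ref{theorem:ge} applies verbatim and the 3-variable deterministic scan Gibbs sampler $(\A_t, \nu_t, \theta_t, \beta_t, \Sigma_t)_{t=0}^{\infty}$ is geometrically ergodic.
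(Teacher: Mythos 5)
Your proposal is correct and follows essentially the same route as the paper: the paper establishes this corollary by identifying the posterior as the special case of the general density \eqref{eq:general_posterior} with $\tilde\theta \equiv (\nu,\theta)^T$, $M$ the block matrix $(-I,\ Z)$, $R \equiv 0$, $c_0 = (\vectorize(Y), j_0)^T$, block-diagonal $C_0$ built from $U_0$ and $J_0$, and $d_i, D_i \equiv x_i, V_i$, and then invokes Theorem~\ref{theorem:ge}. Your additional verification that the displayed conditional updates agree with the general sampler under this substitution is exactly the (routine) bookkeeping the paper leaves implicit.
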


Now consider the Bayesian EIV regression \eqref{eq:EIV_regression_model_X_Y} with classical errors in $X_i$ and additional error in $Y_i$ with priors \eqref{eq:EIV_priors} and \eqref{eq:EIV_priors_A}.
The posterior $\Pi_n$ for this Bayesian model has density
\begin{align*}
&\pi_{n}(\A, \nu, \theta, \beta, \Sigma)
\\
&\propto \left( \frac{1}{\det(\Sigma)} \right)^{(n + a_0 + m + 1)/2}
\exp\left[
- \frac{1}{2} \tr[ \Sigma^{-1} B_0 ]
\right]
\\
&\hspace{.4cm}
\times 
\exp\left[
-\frac{1}{2} \sum_{i = 1}^n ( \V_i - \Theta^T Z_i - \B^T \A_i )^T \Sigma^{-1} ( \V_i - \Theta^T Z_i - \B^T \A_i )
\right]
\\
&\hspace{.4cm}
\times 
\exp\left( -\frac{1}{2} \sum_{i = 1}^n (\V_i - y_i)^T U_i^{-1} (\V_i - y_i) \right)
\\
&\hspace{.4cm}
\times 
\exp\left( -\frac{1}{2} \sum_{i = 1}^n (\A_i - k_i')^T (V_{i}^{-1} + K_i^{-1}) (\A_i - k_i') \right)
\\
&\hspace{.4cm}
\times \exp\left( -\frac{1}{2} ((\theta, \beta)^T - j_0)^T J_0^{-1} ((\theta, \beta)^T - j_0) \right).
\end{align*}
This posterior density is also a special case of the density \eqref{eq:general_posterior} when redefining $\tilde{\theta} \equiv (\nu, \theta)^T$, $M \equiv \mat{ -I & Z }$, $R \equiv 0$, $c_0 = (y, j_0)^T$, 
\[
C_0 \equiv  
\mat{
U_0 & 0 \\
0 & J_0
},
\]
and $d_i, D_i \equiv k_i', (V_{i}^{-1} + K_i^{-1})^{-1}$.

We define a 3-variable deterministic scan Gibbs sampler similarly. 
Initialize $(\A_0, \nu_0, \theta_0, \beta_0, \Sigma_0)$ and for $t \in 1, \ldots$,
\begin{enumerate} 
\item Generate 
$
\Sigma_t | \A_{t-1}, \nu_{t-1}, \theta_{t-1}, \beta_{t-1}
\sim \W^{-1}\left(
n + a_0, 
B'_{n, t}
\right)
$

\item Generate 
$
(\nu_t, \theta_t, \beta_t)^T | \A_{t-1}, \Sigma_{t}
\sim N_{p + q}(
j'_{n, t},
J'_{n, t}
)
$

\item Generate 
$
\A_{i, t} | \nu_t, \theta_{t}, \beta_{t}, \Sigma_{t}
\sim N_p(
k'''_{n, i, t},
K'''_{n, i, t}
)
, i \in 1, \ldots, n
$
where
\begin{align*}
&K'''_{n, i, t}
= \left( \B_{t} \Sigma_{t} \B_{t}^T + V_{i}^{-1} + K_i^{-1} \right)^{-1}
\\
&k'''_{n, i, t}
= K'''_{n, i, t}
\left[ V_i^{-1} x_i + K_i^{-1} k_i +  \B_{t} \Sigma^{-1}_{t} \left( \V_{i, t}  - \Theta_{t}^T Z_i \right) \right].
\end{align*}
\end{enumerate}
Using Theorem~\ref{theorem:ge}, we have the following result.
\begin{corollary}
The 3-variable Gibbs sampler $(\A_t, \nu_t, \theta_t, \beta_t, \Sigma_t)_{t=0}^{\infty}$ for Bayesian EIV regression \eqref{eq:EIV_regression_model_X_Y} with classical errors and priors \eqref{eq:EIV_priors} and \eqref{eq:EIV_priors_A} is geometrically ergodic.
\end{corollary}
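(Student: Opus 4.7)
The plan is to reduce this corollary to a direct application of Theorem~\ref{theorem:ge} by matching the posterior density to the general form \eqref{eq:general_posterior}. The author has already indicated the correspondence: redefine the regression parameter bundle as $\tilde{\theta} \equiv (\nu, \theta)^T$, set $M \equiv \mat{-I & Z}$, $R \equiv 0$, $c_0 \equiv (y, j_0)^T$, take $C_0$ to be the block diagonal matrix with blocks $U_0$ and $J_0$, and set $d_i \equiv k_i'$ and $D_i \equiv (V_i^{-1} + K_i^{-1})^{-1}$. The first step I would carry out is to expand the likelihood and prior factors in $\pi_n(\A, \nu, \theta, \beta, \Sigma)$ and verify, factor by factor, that with these identifications the density is proportional to the generic density \eqref{eq:general_posterior}. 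The Wishart and regression-coefficient factors match immediately. The measurement-error factor on $\A_i$ requires observing that in the classical-error case the quadratic form involves $V_i^{-1} + K_i^{-1}$ centered at $k_i'$, which is exactly $D_i^{-1}$ centered at $d_i$ under the identification. The combined response/latent-response factors can be written as a single quadratic form $((\nu, \theta)^T - c_0)^T C_0^{-1} ((\nu, \theta)^T - c_0)$ up to an additive constant, using the block-diagonal structure of $C_0$, which matches the $(\theta, \beta)$ prior term in \eqref{eq:general_posterior}.

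Next I would verify the hypotheses required by Theorem~\ref{theorem:ge}, namely that $C_0$ and each $D_i$ are SPD so that the drift function $V(\A, \theta, \beta)$ is well defined and strongly convex. The block-diagonal $C_0$ is SPD since both $U_0 = \text{blockdiag}(U_i)$ and $J_0$ are SPD, and each $D_i = (V_i^{-1} + K_i^{-1})^{-1}$ is SPD as the inverse of the sum of two SPD matrices. Similarly $B_0$ remains SPD and $a_0 \ge m$ is inherited from \eqref{eq:EIV_priors}. The matrices $M$ and $R$ are real, and $c_0$ lies in the appropriate Euclidean space. These checks are routine but should be recorded to justify invoking the theorem.

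Finally, I would verify that the specific 3-variable deterministic scan Gibbs algorithm stated just before the corollary is precisely the instance of the generic Gibbs sampler constructed in Section~\ref{section:gibbs_sampler} under the above identifications. The update for $\Sigma_t$ with scale matrix $B_{n, t}'$ matches the generic update with $R \equiv 0$ and the regression term $\Theta^T M_i + \B^T \A_i$ replaced by $-\V_i + \Theta^T Z_i + \B^T \A_i$. The joint update for $(\nu_t, \theta_t, \beta_t)^T$ with mean $j_{n,t}'$ and covariance $J_{n,t}'$ is exactly $c_n(\A_{t-1}, \Sigma_t)$ and $C_n(\A_{t-1}, \Sigma_t)$ under the block identification, using that $\vectorize(R) = 0$ makes the data term vanish and leaves only the prior mean contribution $C_0^{-1} c_0$. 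The update for each $\A_{i,t}$ with mean $k_{n,i,t}'''$ and covariance $K_{n,i,t}'''$ is $d_{n,i}(\theta_t, \beta_t, \Sigma_t)$ and $D_{n,i}(\beta_t, \Sigma_t)$ under $d_i = k_i'$ and $D_i^{-1} = V_i^{-1} + K_i^{-1}$, since $D_i^{-1} d_i = V_i^{-1} x_i + K_i^{-1} k_i$.

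With the correspondence established and the hypotheses of Theorem~\ref{theorem:ge} verified, the geometric ergodicity of $(\A_t, \nu_t, \theta_t, \beta_t, \Sigma_t)_{t=0}^{\infty}$ follows immediately. The only part that requires care is the bookkeeping for the joint $(\nu, \theta, \beta)$ block, in particular confirming that the generic drift function $V$ evaluated at the identified parameters controls a valid Lyapunov function for the enlarged chain; this is automatic because $V$ is built from $C_0^{-1}$ and the $D_i^{-1}$, which under the identification encode both the response-error prior and the $(\theta, \beta)$ prior. No new minorization or drift argument is needed beyond what was established in the proof of Theorem~\ref{theorem:ge}.
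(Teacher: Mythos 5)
Your proposal is correct and follows essentially the same route as the paper: the corollary is proved by identifying the posterior as a special case of the general density \eqref{eq:general_posterior} via the stated substitutions ($\tilde{\theta} \equiv (\nu,\theta)^T$, $M \equiv \mat{-I & Z}$, $R \equiv 0$, $c_0 = (y, j_0)^T$, block-diagonal $C_0$, $d_i, D_i \equiv k_i', (V_i^{-1}+K_i^{-1})^{-1}$) and then invoking Theorem~\ref{theorem:ge} directly. Your additional bookkeeping (SPD checks and matching each conditional update to the generic sampler) is a more explicit version of what the paper leaves implicit, not a different argument.
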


\section{Simulations}
\label{section:simulation}

\subsection{Limitations of the Gibbs sampler in large problem sizes}
\label{section:toy_example}

Theoretically, we developed a \textit{qualitative} convergence result for the Gibbs sampler in Bayesian multivariate EIV regression.
It is important in practice to understand the relationship between scaling of the problem size and the estimation reliability from the Gibbs sampler.
We look at artificially generated data to empirically demonstrate the dependence of the Gibbs sampler when the the dimension of the response $m$ and the dimension of the features $p$ are increasing in configurations $(m, p) = (1, 1), (2, 7), (3, 7)$ in the Bayesian posterior.
Artificial data is generated according to the multivariate EIV Berkson linear regression model for $i =1, \ldots, 50$ with
\begin{align*}
&\Sigma \sim \W^{-1}(m, 10^{-3} I)
\\
&(\theta, \beta)^T \sim N_{m +  m p}(0, 10^{3} I)
\\
&X_i | \A_i \sim N_p(\A_i, .2 I)
\\
&Y_i | \A_i, \Theta, \B, \Sigma \sim N_{m}(\Theta^T 1 + \B^T \A_i, \Sigma)
\end{align*}

We simulate $T = 10^5$ MCMC realizations from the Gibbs sampler in each configuration using $10^4$ realizations for burn-in and analyze diagnostics for $\beta_t$ taking values in $\R^{m p}$.
We independently replicate the simulation $5$ times to reduce variability.
Geometric ergodicity guarantees the properly scaled and summed samples from the Gibbs sampler 
\[
\frac{1}{\sqrt{T}} \sum_{t=1}^T \left[ \beta_t - \int \beta \Pi_n(d\beta) \right]
\to N(0, \Sigma_*)
\]
as $T \to \infty$ in distribution where $\Sigma_*$ is a SPD covariance matrix.
The central limit theorem can be seen to hold in this case due to the established drift condition with the drift function.
Figure~\ref{figure:toy_example:a} and Figure~\ref{figure:toy_example:b} plot the largest and smallest eigenvalues of a batch means estimate to the multivariate standard error matrix ${\Sigma_{*} }^{1/2}$ in the central limit theorem. 
This simulation shows an increase in the largest eigenvalue in larger size problems which can lead to suggesting more iterations are needed for appropriate estimation in practice.
Figure~\ref{figure:toy_example:c} plots the multivariate effective sample size \citep{dootika:et:al:2019}.
We can also see a relatively sharp decrease in the estimation of the effective sample size as the problem size increases suggesting the algorithm should be run for many iterations even in moderately sized problems.
This simulation demonstrates that even though the algorithm is \textit{always} geometrically ergodic and can scale reasonably well to larger problem sizes, the Gibbs sampler generally requires many more iterations for reliable estimation even in moderately sized problems. 
Simulation code is available using Python \citep{python1995} and \citep{numpy2020} for matrix calculations at \url{https://github.com/austindavidbrown/BayesEIV}.

\begin{figure}[t]
\centering
\begin{subfigure}{.32\textwidth}
  \centering
  \includegraphics[width=\linewidth]{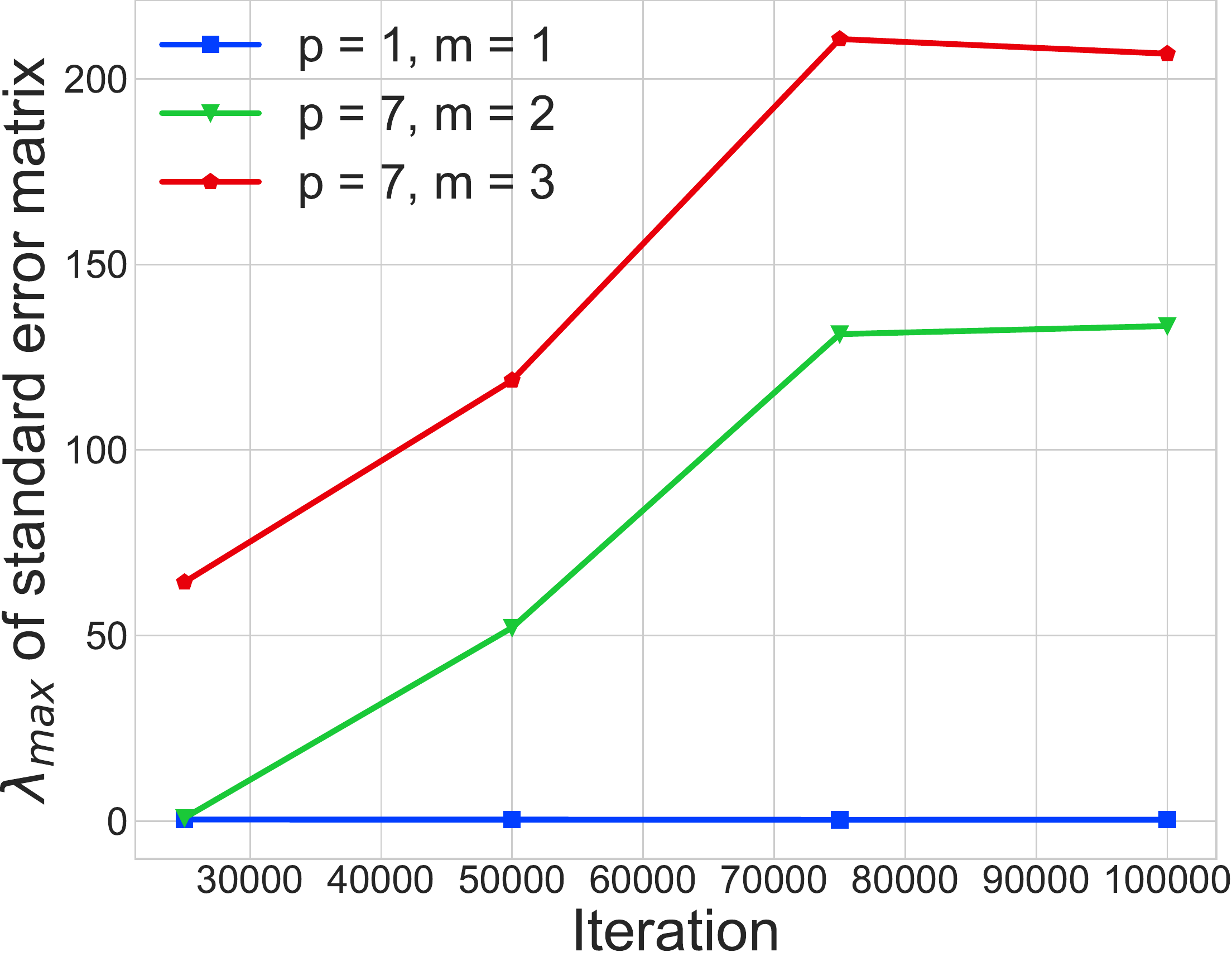}
  \caption{}
  \label{figure:toy_example:a}
\end{subfigure}
\begin{subfigure}{.32\textwidth}
  \centering
  \includegraphics[width=\linewidth]{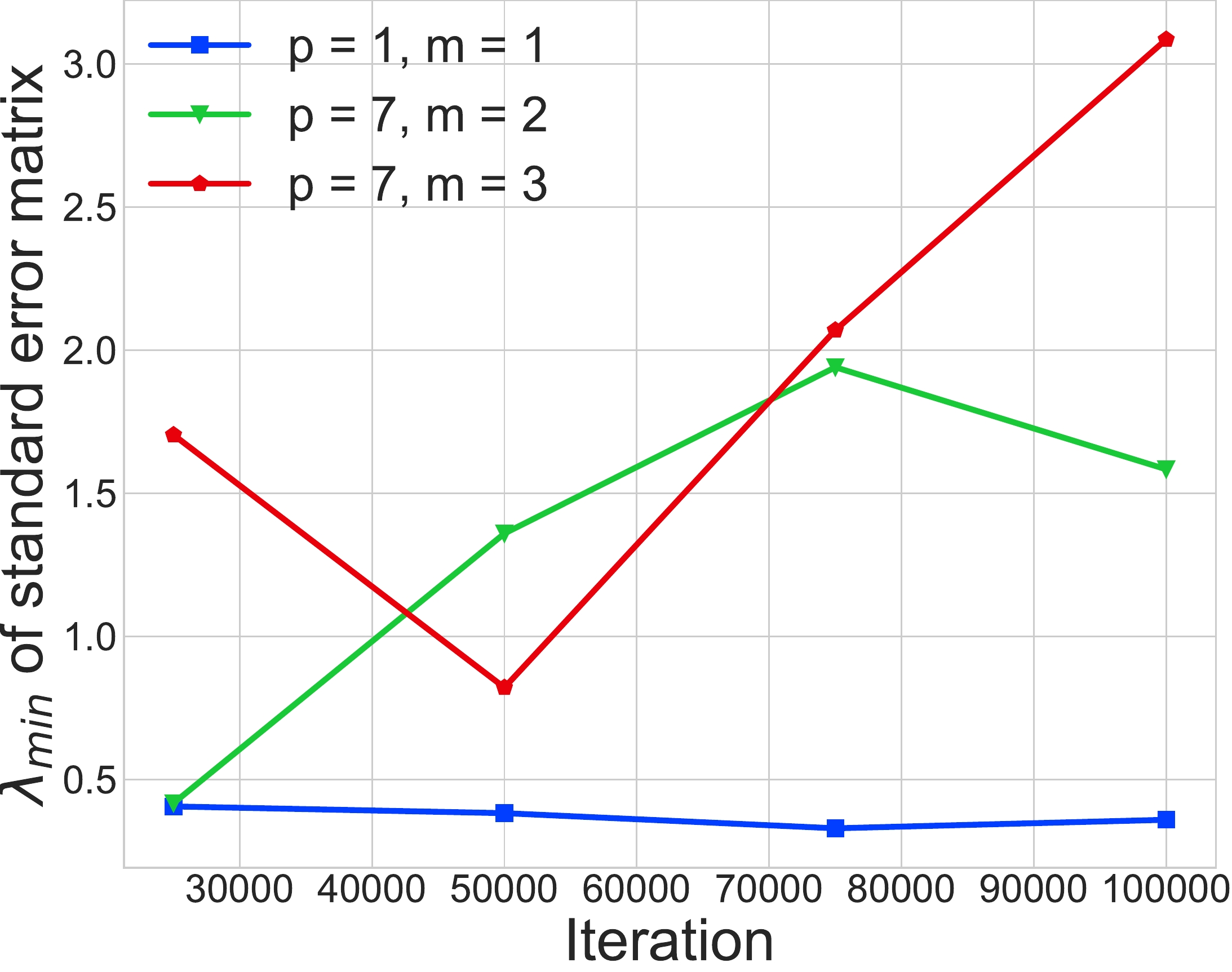}
  \caption{}
  \label{figure:toy_example:b}
\end{subfigure}
\begin{subfigure}{.32\textwidth}
  \centering
  \includegraphics[width=\linewidth]{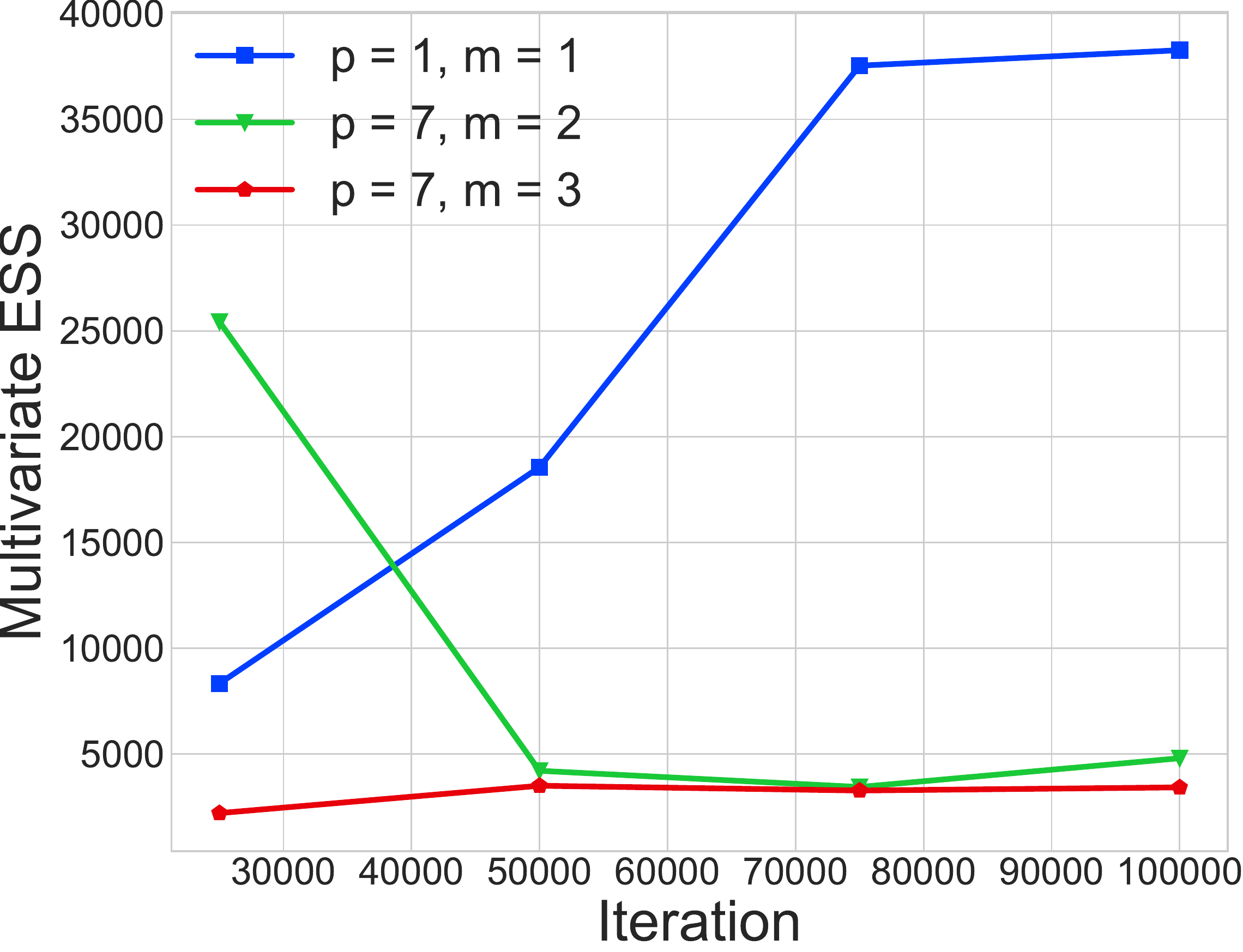}
  \caption{}
  \label{figure:toy_example:c}
\end{subfigure}
\caption{(a), (b) Largest and smallest eigenvalues of the MCMC standard error matrix targeting the average of $\beta$ for iterations of the Gibbs sampler and (c) the multivariate effective sample size for iterations of the Gibbs sampler}\label{figure:toy_example}
\end{figure}

\subsection{Robustness to model misspecification}

Although the multivariate Bayesian model for EIV accounts for additional error in the features, this error can be misspecified.
In particular, the error $X_i | \A_i$ from the model in Section~\ref{section:toy_example} may be a multivariate $t$ distribution with heavier tails in practical problems.
We are interested to empirically study the robustness of the convergence of the Gibbs sampler to misspecification in this modeling error.
Denote $t_d(v, m, V)$ as a multivariate $t$ distribution in dimension $d$ with $v$ degrees of freedom, location vector $m$, and scale matrix $V$.
With $df$ denoting the degrees of freedom, artificial data is generated according to the misspecified multivariate EIV Berkson linear regression model for $i =1, \ldots, 50$ with
\begin{align*}
&\Sigma \sim \W^{-1}(3, 10^{-3} I)
\\
&(\theta, \beta)^T \sim N_{3 +  9}(0, 10^{3} I)
\\
&X_i | \A_i \sim \text{t}_3(\text{df}, \A_i, .2 I)
\\
&Y_i | \A_i, \Theta, \B, \Sigma \sim N_{3}(\Theta^T 1 + \B^T \A_i, \Sigma).
\end{align*}

We look at compare more dispersed tail behavior with $df = 2$ and less dispersed tail behavior with $df = 10$.
We replicate a simulation $5$ times where we simulate $T = 10^5$ MCMC realizations from the Gibbs sampler in each configuration using $10^4$ realizations for burn-in and analyze diagnostics for $\beta_t$.
Similar to the previous simulation in Section~\ref{section:toy_example}, Figure~\ref{figure:robust_example:a} and Figure~\ref{figure:robust_example:b} plot the largest and smallest eigenvalues of a batch means estimate to the multivariate standard error matrix and
Figure~\ref{figure:robust_example:c} plots the multivariate effective sample size \citep{dootika:et:al:2019}.
We can see similar behavior in the estimation from the Gibbs sampler based on the both smaller and larger degrees of freedom. 
The simulation results suggest the Gibbs sampler is reasonably robust to misspecification of the tails in the error distribution of the features for $X_i | \A_i$. 

\begin{figure}[t]
\centering
\begin{subfigure}{.32\textwidth}
  \centering
  \includegraphics[width=\linewidth]{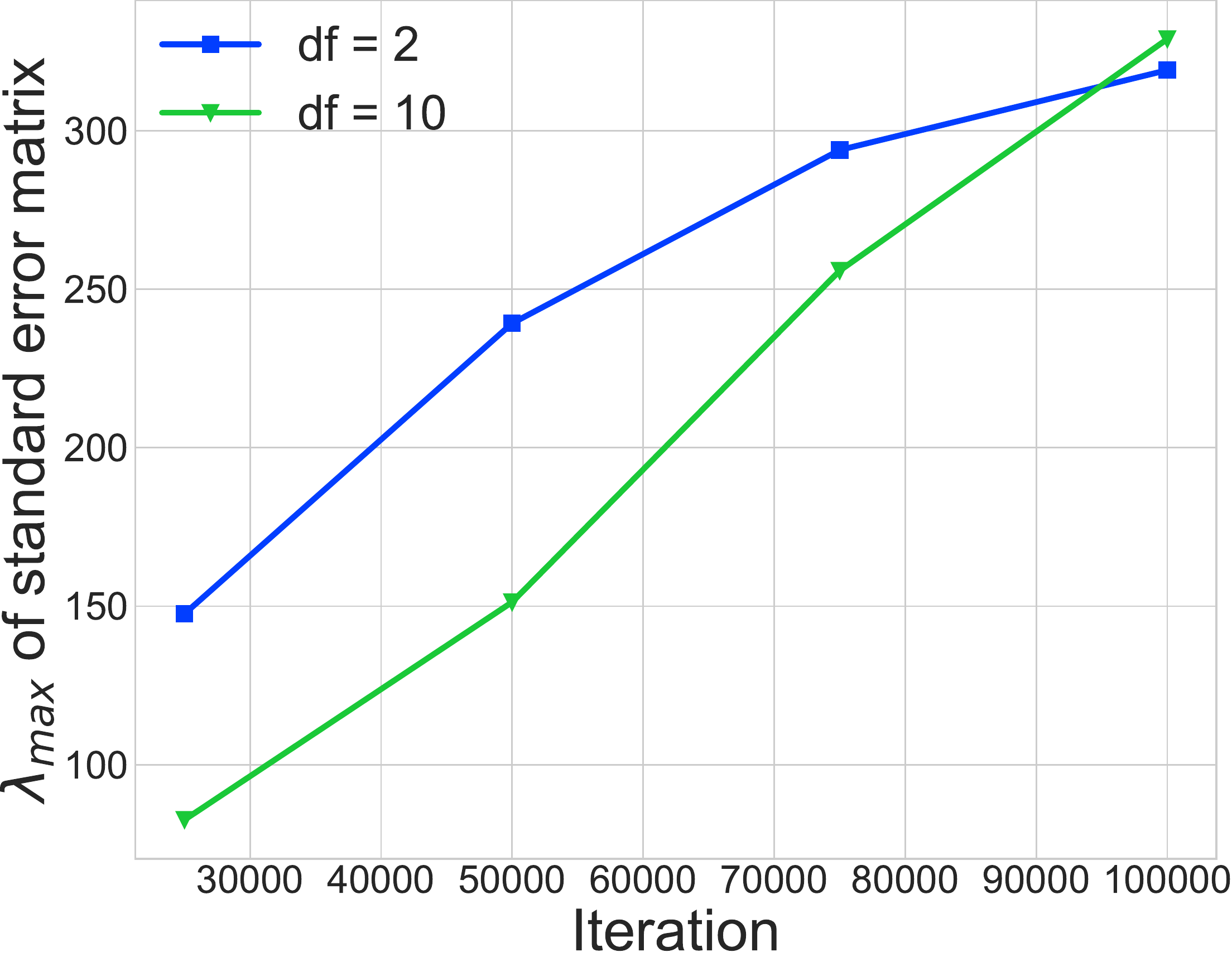}
  \caption{}
  \label{figure:robust_example:a}
\end{subfigure}
\begin{subfigure}{.32\textwidth}
  \centering
  \includegraphics[width=\linewidth]{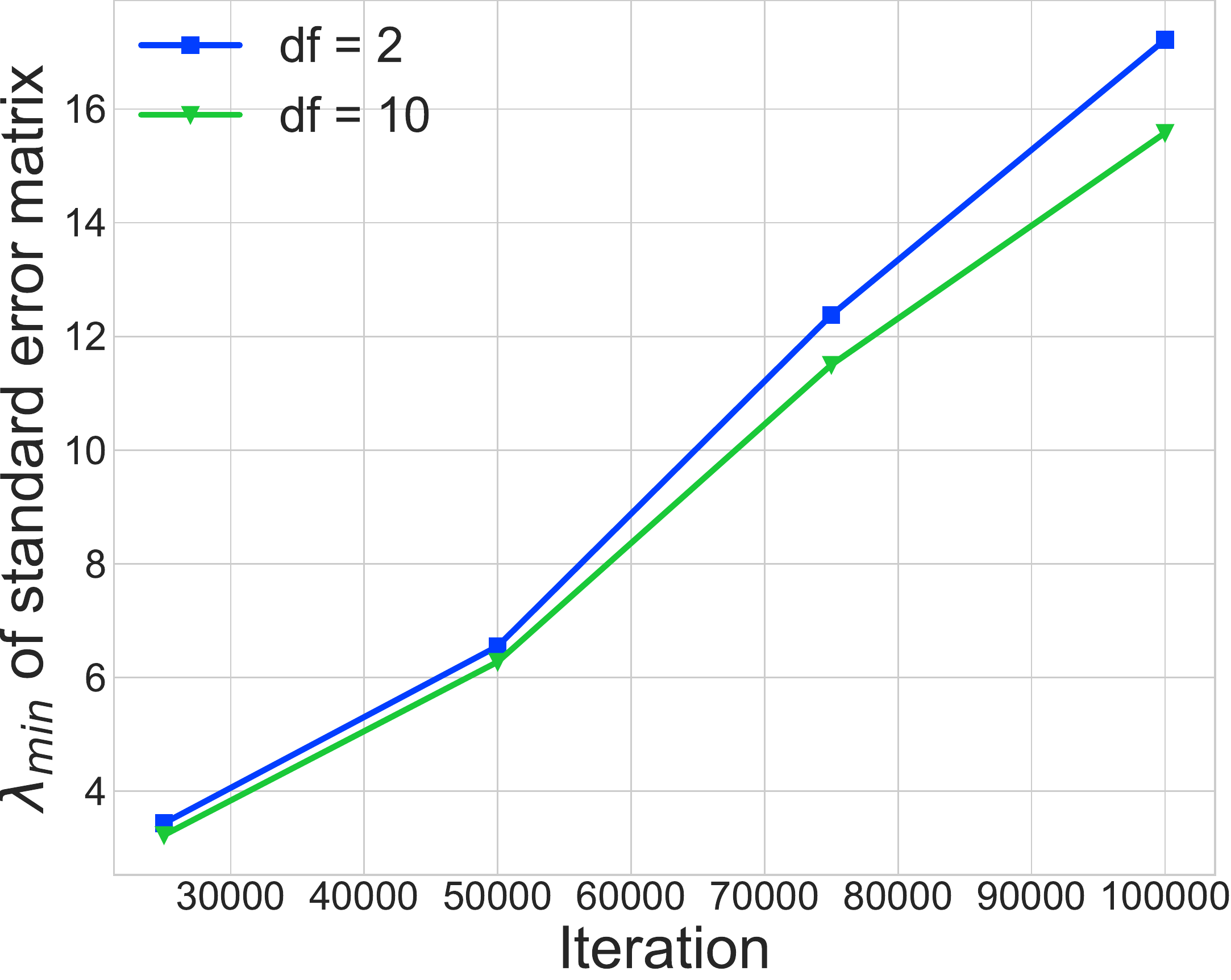}
  \caption{}
  \label{figure:robust_example:b}
\end{subfigure}
\begin{subfigure}{.32\textwidth}
  \centering
  \includegraphics[width=\linewidth]{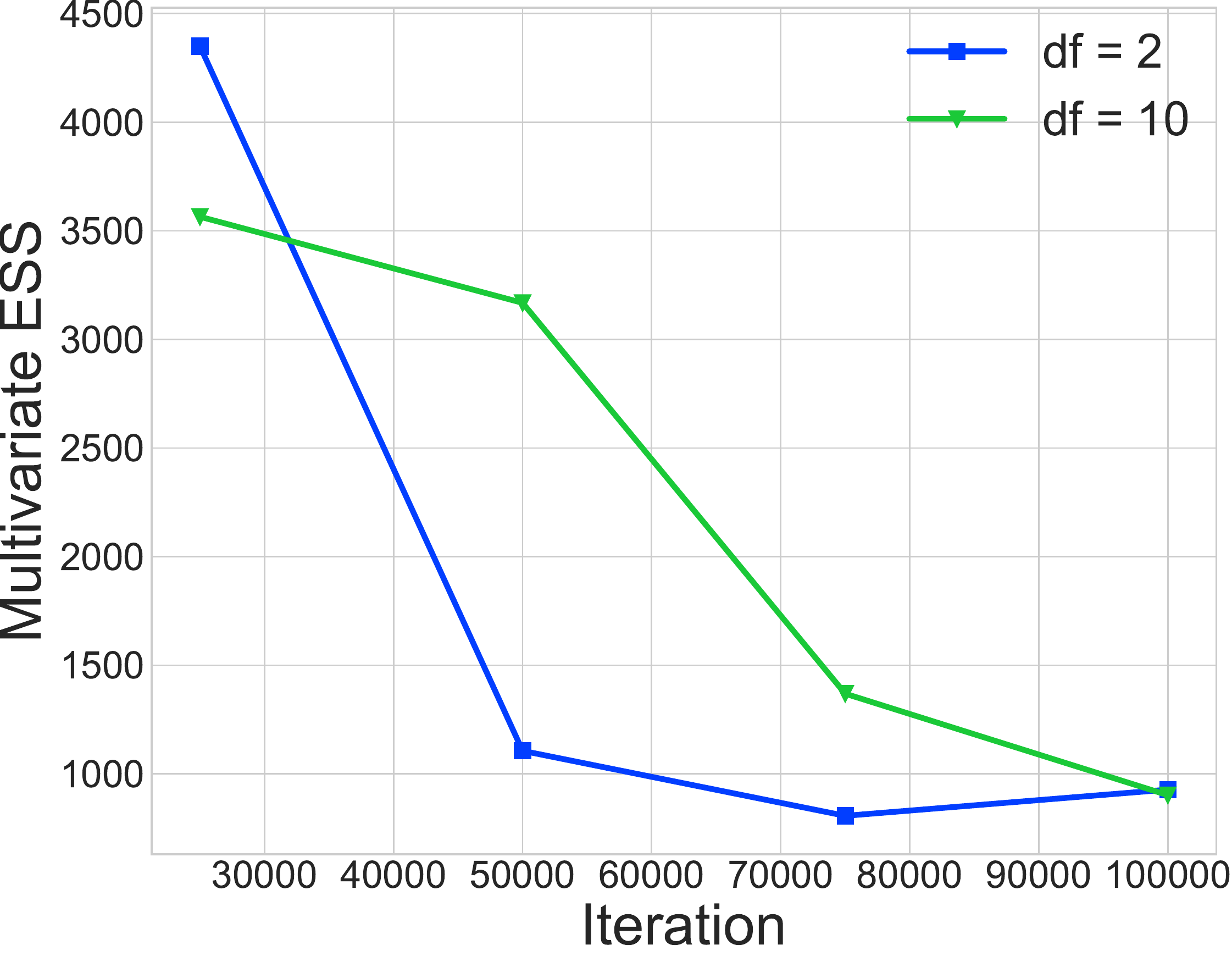}
  \caption{}
  \label{figure:robust_example:c}
\end{subfigure}
\caption{(a), (b) Largest and smallest eigenvalues of the MCMC standard error matrix (c) the multivariate effective sample size for iterations of the Gibbs sampler}\label{figure:robust_example}
\end{figure}

\section{Real-data example: measurement error in astrophysics}
\label{section:astro_example}

We look at Bayesian EIV linear regression proposed and analyzed in \citep{Harris2014, Hilbe2017}.
The dataset consists of the central galaxy supermassive black hole mass and the stellar bulge velocity dispersion from $n = 46$ different galaxies \citep{Harris2014}.
The response $Y_i$ is the logarithm of the observed central black hole mass and the predictor variable $X_i$ is the logarithm of the observed velocity dispersion.
The measurement errors are known beforehand and denoted by $\e_{Y_i}$ and $\e_{X_i}$ for both the response and predictor variables.
The EIV linear regression model studied in \citep{Hilbe2017} folllows
\begin{align*}
&\sigma^2 \sim \text{Inverse-gamma}(10^{-3}, 10^{-3})
\\
&\alpha \sim N_1(0, 10^{3}), \beta \sim N_1(0, 10^{3})
\\
&\A_i \sim N_1(0, 10^{3}), X_i | \A_i \sim N_1(\A_i, \e_{X_i}^2)
\\
&\V_i | \A_i, \theta, \beta, \sigma^2 \sim N_1(\theta + \A_i \beta, \sigma^2), Y_i | \V_i \sim N_1(\V_i, \e_{y_i}^2).
\end{align*}

We generate $10^5$ MCMC realizations from the Gibbs sampler.
Figure~\ref{figure:astro_example} plots the autocorrelation, estimates to the standard errors in the central limit theorem, and effective sample sizes from these realizations.
The autocorrelations are computed up to lag $20$.
Overall, we see the Gibbs sampler performs well. 
However, the standard error and effective sample size plots suggest that even though the Gibbs sampler is geometrically ergodic, many iterations are still recommended even in low dimensions.
These figures suggest empirical diagnostics for the regression parameter $\beta$ as a reasonable choice as opposed to the other parameters $\alpha, \sigma^2$ to determine the reliability of the algorithm in practice.

\begin{figure}[t]
\centering
\begin{subfigure}{.32\textwidth}
  \centering
  \includegraphics[width=\linewidth]{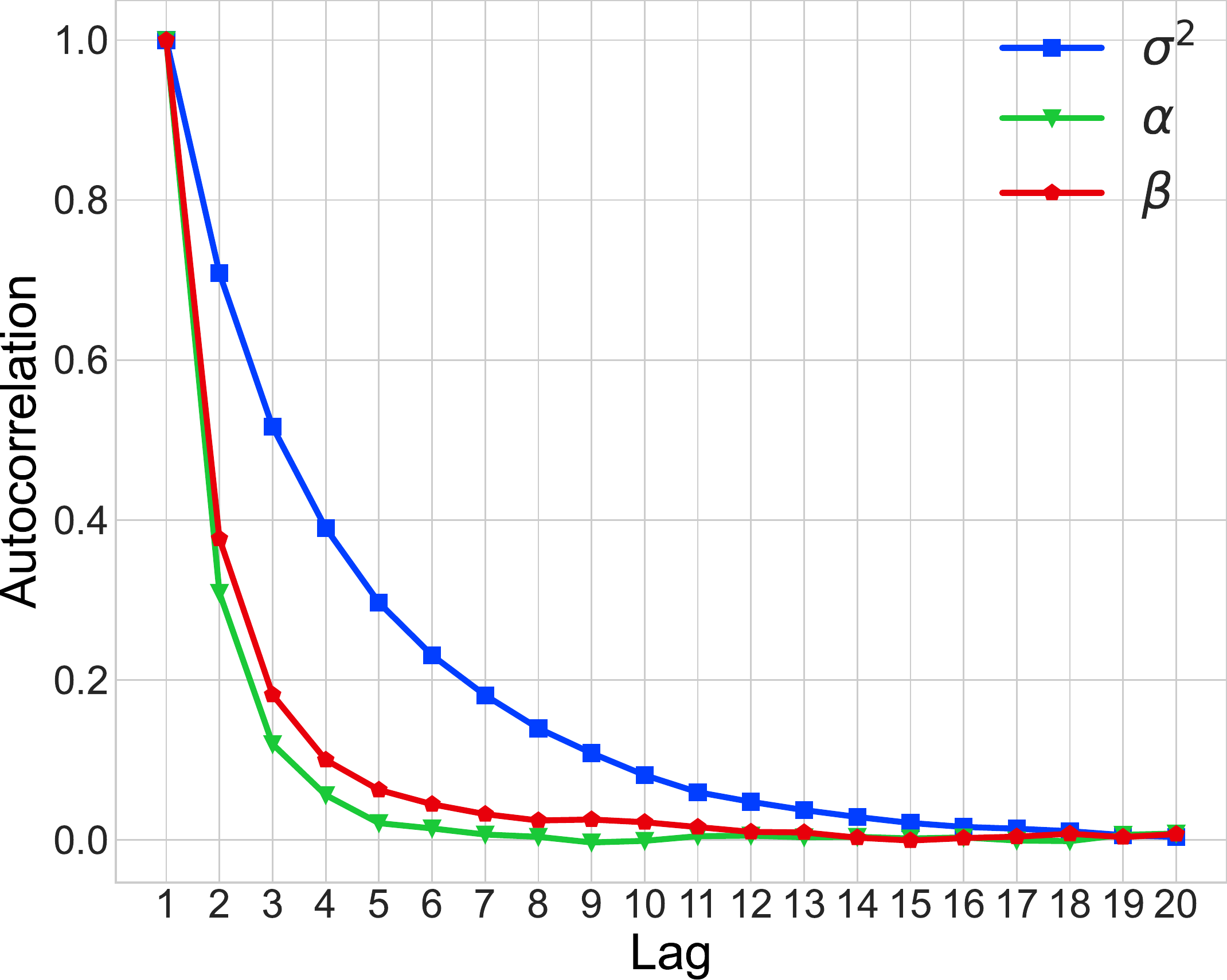}
  \caption{}
\end{subfigure}
\begin{subfigure}{.32\textwidth}
  \centering
  \includegraphics[width=\linewidth]{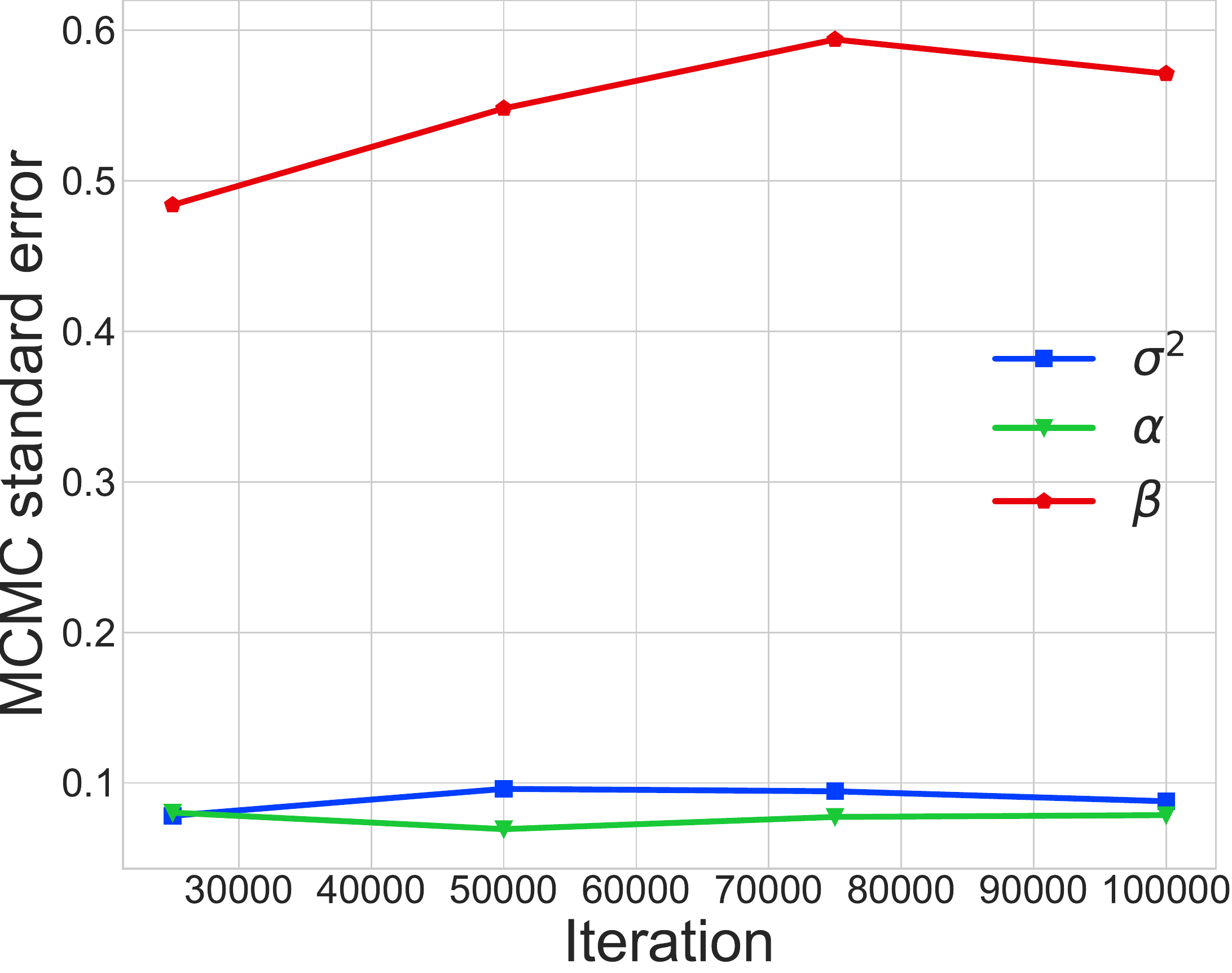}
  \caption{}
\end{subfigure}
\begin{subfigure}{.32\textwidth}
  \centering
  \includegraphics[width=\linewidth]{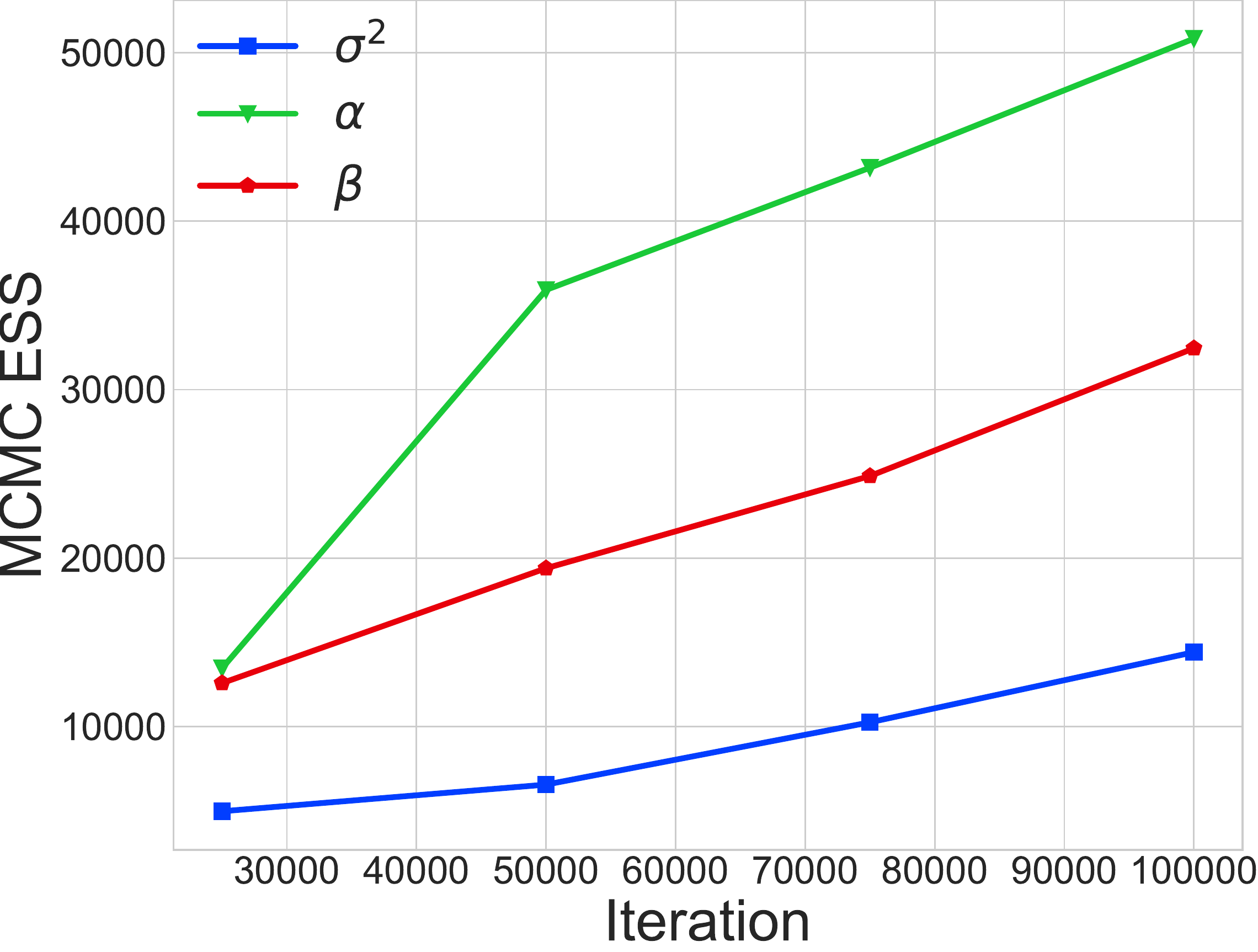}
  \caption{}
\end{subfigure}
\caption{(a) Autocorrelation for each regression parameter (b) MCMC standard errors for the regression parameters (c) MCMC effective sample size plots for each regression parameter} \label{figure:astro_example}
\end{figure}

\section{Conclusion and Future Directions}
\label{section:conclusion_me}

We showed using a $3$-variable deterministic scan Gibbs sampler to sample the posterior in $4$ different multivariate Bayesian EIV regression models with additive Gaussian errors and independent priors is always geometrically ergodic.
This is of pragmatic importance to practitioners as trustworthy estimation from a Gibbs sampler is dependent on the speed of convergence of the Markov chain.
More specifically, time averages from the Markov chains have many practically relevant theoretical guarantees such as a central limit theorem.
Secondly, these Gibbs samplers can be simulated efficiently without the need for complex, intermediate Metropolis-Hastings or rejection sampling steps.
One drawback, however, is our convergence analysis is qualitative as we do not construct an explicit convergence rate.

There are many future research directions in studying the convergence of Gibbs samplers in EIV models. 
It appears reasonable that some Gibbs samplers for generalized linear models such as the Pólya-Gamma sampler will also be geometrically ergodic \citep{Choi2013, Polson2013, Wang2018}.
It seems also interesting to look at alternative errors in the variables such as non-Gaussian or non-additive errors.

\begin{acks}[Acknowledgments]
Thanks to Galin L. Jones for helpful guidance and insightful comments for developing this article.
Thanks to the anonymous referees for recommendations to improve and extend some results from a previous version of this manuscript.
\end{acks}

\bibliographystyle{imsart-nameyear}
\bibliography{main}

\end{document}